\documentclass{amsart}
\usepackage{tikz, tikz-cd}
\usetikzlibrary{calc,decorations.markings,decorations.pathmorphing,arrows}
\usetikzlibrary{shapes.misc}
\usepackage{amsmath, amssymb, amsthm, amsfonts, mathrsfs, amsfonts}
\usepackage[a4paper,left = 2.5cm, right= 2.5cm, top = 2.5cm, bottom = 2.5cm, marginpar=2cm]{geometry}
\usepackage{xcolor}
\usepackage[all,tips]{xy}
\usepackage{hyperref}

\newcommand{\Z}{\ensuremath{\mathbb{Z}}}

\newtheorem{thm}{Theorem}[section]

\newtheorem*{question*}{Question}

\newtheorem{cor}[thm]{Corollary}
\newtheorem{lemma}[thm]{Lemma}
\newtheorem{prop}[thm]{Proposition}
\theoremstyle{definition}
\newtheorem{defin}[thm]{Definition}

\newtheorem{nota}[thm]{Notation}
\newtheorem{rmk}[thm]{Remark}

\newcommand{\on}{\operatorname}

\newcommand{\sh}{\ensuremath{\mathbf{s}}}
\newcommand{\cyc}{\ensuremath{\on{cyc}}}
\newcommand{\source}{\ensuremath{\mathfrak{s}}}
\newcommand{\target}{\ensuremath{\mathfrak{t}}}

\newcommand{\cC}{\mathcal{C}}
\newcommand{\cD}{\mathcal{D}}
\newcommand{\fd}{\operatorname{fd}}
\newcommand{\perf}{\operatorname{per}}

\title{Quivers with potentials and actions of finite abelian groups}

\author{Simone Giovannini}
\address{ORACLE Italy,
	Viale Fulvio Testi 136,
	20092 Cinisello Balsamo (Milano),
	Italy}
\email{simone.giovannini@oracle.com}

\author{Andrea Pasquali}
\address{Universit\"at Stuttgart,
	Institut f\"ur Algebra und Zahlentheorie,
	Pfaffenwaldring 57
	D-70569 Stuttgart,
	Germany}
\email{andrea.pasquali@mathematik.uni-stuttgart.de}

\author{Pierre-Guy Plamondon}
\address{Universit\'e Paris-Saclay, CNRS, Laboratoire de math\'ematiques d'Orsay, 91405, Orsay, France}
\email{pierre-guy.plamondon@math.u-psud.fr}
\begin{document}

\begin{abstract}
Let $G$ be a finite abelian group acting on a path algebra $kQ$ by permuting the vertices and preserving the arrowspans. Let $W$ be a potential on the quiver $Q$ which is fixed by the action. We study the skew group dg algebra $\Gamma_{Q, W}G$ of the Ginzburg dg algebra of $(Q, W)$. It is known that $\Gamma_{Q, W}G$ is Morita equivalent to another Ginzburg dg algebra $\Gamma_{Q_G, W_G}$, whose quiver $Q_G$ was constructed by Demonet. In this article we give an explicit construction of the potential $W_G$ as a linear combination of cycles in $Q_G$, and write the Morita equivalence explicitly.
As a corollary, we obtain functors between the cluster categories corresponding to the two quivers with potentials.
\end{abstract}

\maketitle

\section{Introduction}\label{sec:intro}
Quivers with potentials were introduced in \cite{DWZ08} as a tool to categorify the cluster algebras of S.~Fomin and A.~Zelevinsky \cite{FZ02}.  

A potential on a quiver is a (possibly infinite) linear combination of cyclic paths, considered up to cyclic equivalence.  A quiver with potential~$(Q,W)$ defines an associative algebra, the Jacobian algebra~$J(Q,W)$, which can be viewed as the cohomology in degree zero of a differential graded (=dg) algebra, the Ginzburg dg algebra~$\Gamma_{Q,W}$ \cite{Gin06,Ami09}.  

In this paper, we are interested in quivers with potentials with a finite group action.  Let~$(Q,W)$ be a quiver with potential and let~$G$ be a group acting on the path algebra~$kQ$ by sending vertices to vertices and fixing the space of all arrows; assume moreover that~$G$ fixes the potential~$W$. Such an action defines an action on the Jacobian algebra and on the Ginzburg dg algebra.  As in \cite{RR85} (extended to dg algebras), one can construct the skew group algebra~$J(Q,W)G$ and skew group dg algebra~$\Gamma_{Q,W}G$.  It was proved in~\cite{LM18} that the latter is Morita equivalent to the Ginzburg dg algebra of a quiver with potential; however, the proof does not readily give a way to compute it.

Our aim in this paper is to compute this quiver with potential $(Q_G, W_G)$ when the group~$G$ is abelian.  The quiver itself can be computed for any finite group using the work of \cite{Dem10}.  In the case where the group is of order~$2$ and acts on~$Q$, the potential $W_G$ was computed in~\cite{AP17}; this was used to describe the cluster category of a triangulated surface with punctures. More recently, $W_G$ was computed in \cite{GP19} for $G$ any cyclic group under some assumptions on the action.
Finally, an algorithm to compute $W_G$ for any finite group was given in \cite{LM18b}, but it relies on inverting a possibly large matrix and is thus not practical for computing examples.

We define $W_G$ by considering the image of $W$ via a naturally defined function $\iota: \widehat{k Q}\to (\widehat{k Q})G$ (which is not an algebra morphism). Our main result is Theorem~\ref{thm:main}, which claims that $W_G$ thus defined gives an explicit Morita equivalence between 
$\Gamma_{Q, W}G$ and $\Gamma_{Q_G, W_G}$. Moreover, in Proposition~\ref{prop:iota(path)} we present formulas which can be used in practice to express $W_G$ explicitly as a linear combination of cycles of $Q_G$.

As in \cite{AP17}, our results allow us to obtain functors between (generalized) cluster categories.  It is, however, unclear how the work of \cite{AP17} could be generalized to surfaces with orbifold points of order larger than 2. A similar issue arises in \cite{LFV17}, where triangulations of a disk with an orbifold point of order 3 give rise to algebras which are not quite cluster algebras.

We note that certain group actions on quivers with potentials are studied in \cite{PS17}.  Similar methods are also applied in \cite{AB19} in a different context, that is, the study of the derived category of skew-gentle algebras.  We also note that group actions on cluster algebras appear implicitly in the literature on cluster algebras from surfaces with orbifold points (see, for instance, \cite{FST12,FT17}); however, in these papers, the resulting cluster algebra is not of simply-laced type and could be said to be obtained by ``folding'' a quiver with a group action.  In this paper, we do not ``fold'' quivers with potential with a group action, and so all of the objects that we study are, in a sense, ``simply-laced''.

The paper is structured as follows.
In Section~\ref{sec:notation} we set up some conventions and recall the notions of skew group algebra and Ginzburg dg algebra. Sections~\ref{sec:sga} and \ref{sec:iso} are devoted to studying in depth the case of a skew group algebra of a path algebra by an abelian group.
Section~\ref{sec:mainresult} contains the definition of the potential $W_G$ and our main result, followed by a discussion of the consequences on generalized cluster categories. Finally, in Section~\ref{sec:example} we compute a detailed example. Section~\ref{sec:index_nota} contains an index of notation for the convenience of the reader.

\subsubsection*{Acknowledgement}
We are thankful to Patrick Le Meur for his encouragement and helpful discussions about this project. Most of this work was carried out while S.~G.~and A.~P.~were visiting the D\'epartement de Math\'ematiques d'Orsay, which we also thank. S.~G.~was supported by grants BIRD163492 and DOR1690814 of Padova University. A.~P.~was supported by Uppsala University and the Alexander von Humboldt Foundation. P.-G.~P.~was supported by French ANR grants SC3A (15CE40000401).

\section{Notation and conventions}\label{sec:notation}
In this section we fix some notation, as well as briefly recall the definitions of several objects we are going to discuss. Since this article is technical and the notation is quite heavy, we have included a table of symbols in Section~\ref{sec:index_nota}.

\subsection{Basics}\label{subsec:basics}
We fix an algebraically closed field $k$.
An \emph{algebra} means a finite-dimensional associative unital $k$-algebra. We call a \emph{basic version} of an algebra $A$ any basic algebra which is Morita equivalent to $A$.
A \emph{quiver} is a finite directed graph. For an arrow $a$ in a quiver, we write $\source(a)$ and $\target(a)$ for its source and target respectively. 
Arrows in quivers are composed as functions, that is if $ab$ is a path then $\source(a)=\target(b)$. If $Q$ is a quiver, we write $Q_0$ for its set of vertices and $Q_1$ for its set of arrows. 
The \emph{path algebra} $kQ$ is the algebra generated by all paths (including those of length zero) of $Q$, with multiplication induced by composition of paths. We make the somewhat non-standard choice of identifying the vertex $i\in Q_0$ with the stationary path at $i$, in order to avoid clogging the notation. Thus $i\in Q_0$ is an idempotent in $kQ$.

\subsection{Quivers with potentials}\label{subsec:qp}
For a quiver $Q$, we denote by $\widehat{k Q}$ the completion of $kQ$ with respect to the arrow ideal. Thus $\widehat{k Q}$ has a topological base of paths in $Q$, or equivalently its elements are infinite linear combinations of paths in $Q$. 
The space of \emph{potentials} $P_Q$ is the vector space\[
P_Q= \widehat{kQ}/ \overline{[\widehat{kQ}, \widehat{kQ}]},
\]
where $\left[-,-\right]$ denotes the commutator and $\overline{\phantom{xx}}$ the closure. Thus a potential is an infinite linear combination of cyclic paths up to cyclic permutation.
A \emph{quiver with potential} is a pair~$(Q,W)$, where~$Q$ is a quiver and~$W$ is a potential on~$Q$.

We write $\widehat{k Q}_{cyc}$ for the subalgebra of $\widehat{k Q}$ of (possibly infinite) linear combinations of cyclic paths. 
We define a map $\sh: \widehat{kQ}\to \widehat{kQ}$ (see \cite{HI11b}) by
$\sh( a_n \cdots  a_1)= \sum_{i=1}^{n} a_i \cdots  a_1 a_n \cdots  a_{i+1}.$ This induces a map $\sh: 
\widehat{kQ}_{cyc}\to \widehat{kQ}$ which in turn induces a map $\sh: P_Q\to \widehat{kQ}$.

For $ a\in Q_1$, we define a map $\delta_ a:\widehat{kQ}\to \widehat{kQ}$ by 
$$\delta_ a(p) = 
\begin{cases}
q, \text{ if } p =  a q;\\
0, \text{ otherwise.}
\end{cases} $$
Define the partial derivative $\partial_ a : P_Q\to \widehat{kQ}$ by $\partial_ a (W) = \delta_ a(\sh W)$.  The \emph{Jacobian algebra} of a quiver with potential~$(Q,W)$ is defined by
\[
 J(Q,W) = \widehat{kQ}/\overline{(\partial_a W \ | \ a\in Q_1)}.
\]

We recall from \cite{AP17} the construction of the \emph{Ginzburg dg algebra}~$\Gamma_{Q,W}$ associated to a quiver with potential~$(Q,W)$.  First define a graded quiver~$\overline Q$ whose vertices are the same as those of~$Q$, and with arrows as follows:
\begin{itemize}
 \item for each arrow~$a:i\to j$ of~$Q$, there is an arrow~$a:i\to j$ of degree~$0$ in~$\overline Q$;
 \item for each arrow~$a:i\to j$ of~$Q$, there is an arrow~$ a^*:j\to i$ of degree~$-1$ in~$\overline Q$;
 \item for each vertex~$i$ of~$Q$, there is an arrow~$t_i:i\to i$ of degree~$-2$.
\end{itemize}
As a graded algebra, the Ginzburg dg algebra~$\Gamma_{Q,W}$ is defined to be~$\Gamma_{Q,W} = \bigoplus_{m\leq 0} \widehat{k\overline Q}_m$, where~$\widehat{k\overline Q}_m$ is the space of (possibly infinite) linear combinations of paths of degree~$m$ in~$\overline Q$.  The differential is defined on the arrows by
\begin{itemize}
 \item for each arrow~$a:i\to j$ of~$Q$, $d(a) = 0$ and $d( a^*) = \partial_a W$;
 \item for each vertex~$i$ of~$Q$, $d(t_i) = i \big(\sum_{a \in Q_1} [a, a^*]\big) i$,
\end{itemize}
and then extended to all of $\Gamma_{Q, W}$ by the Leibniz rule.

The Ginzburg dg algebra is a ``dg enrichment" of the Jacobian algebra: by construction one sees directly that $H^0(\Gamma_{Q, W})\cong J(Q, W)$.

\subsection{Skew group algebras}\label{subsec:sga}
Let $G$ be a finite abelian group acting on an algebra $A$ by automorphisms. Let us also assume that $|G|\neq 0$ in $k$. We will study the \emph{skew group algebra} $AG$, which is the vector space $A\otimes_k kG$ equipped with the multiplication induced linearly by 
\[
(a\otimes g)(b\otimes h) = ag(b)\otimes gh.
\]
We denote by $ G^\vee$ the group of irreducible characters of $G$.
The group algebra $kG$ is basic semisimple of dimension $|G|$, with a basis given by $\{e_\rho, \, \rho \in  G^\vee\}$, where
\[
e_\rho=\frac{1}{|G|}\sum_{g\in G}\rho(g)g
\]
is an idempotent.
If $H\leq G$, we denote by $e_{\rho |_H}$ the idempotent
\[
e_{\rho|_H}=\frac{1}{|H|}\sum_{g\in H}\rho(g)g
\]
of $kG$. We remark that $e_\rho e_{\rho|_H}= e_\rho$ in $kG$.

If $G$ acts on a finite set $I$, we write $G(i)$ for the orbit of an element $i\in I $ and $G_i$ for the stabilizer of $i$. Since $G$ is abelian, $G_i$ is the stabilizer of $j$ for every $j\in G(i)$. We write $G_{ij} = G_i\cap G_j$.

Let $(Q,W)$ be a quiver with potential, and assume that $G$ acts on $kQ$ by permuting the vertices and stabilizing the arrowspan. Assume moreover that $g(W)=W$ for every $g\in G$.
Then we extend the action of $G$ to an action by dg automorphisms on $\Gamma_{Q, W}$ as follows: $g( a^*) = \sum\lambda_i  b^*_i$, where $g( a) = \sum\lambda_i  b_i,  b_i\in Q_1$. For degree -2, we set $g(t_i) = t_{g(i)}$. We then define the \emph{skew group dg algebra} $\Gamma_{Q, W}G$ as follows: 
\begin{itemize}
 \item as a graded vector space, it is equal to the tensor product~$\Gamma_{Q,W} \otimes_k kG$, where~$kG$ is concentrated in degree~$0$;
 \item multiplication is $k$-linear and defined by~$(x\otimes g)(y\otimes h) = xg(y)\otimes gh$;
 \item the differential is $k$-linear and defined by~$d(x\otimes g) = dx\otimes g$.
\end{itemize}

It follows from \cite[Proposition 2.2 and Corollary 2.3]{AP17} that~$\Gamma_{Q,W}G$ is a dg algebra, and that its cohomology in degree zero is isomorphic to the skew group algebra~$(H^0\Gamma_{Q,W})G$.

\section{Skew group algebras of hereditary algebras by abelian groups}\label{sec:sga}
In Theorem~\ref{thm:main} we are going to prove an isomorphism between the basic version of the skew group algebra of a Ginzburg dg algebra and a certain new Ginzburg dg algebra. In this section we focus on the case where the potential is zero, that is we start from a hereditary algebra (when the potential is zero, the dg structure is trivial). Later we will extend this construction to the case when the potential is nonzero.

For the rest of this section, let then $G$ be a finite abelian group and let $Q$ be a quiver, and assume that $G$ acts linearly on $k Q$ by permuting the vertices and stabilizing the span of the arrows. Our goal in this section is to describe the quiver $Q_G$ of a basic version of the skew group algebra $(k Q)G$, as well as a Morita idempotent $\bar e\in (k Q)G$ such that $k Q_G \cong \bar e(k Q)G\bar e$. Then in the next section we will explicitly construct an isomorphism $\phi :k Q_G \to \bar e(k Q)G\bar e$.

The quiver $Q_G$ has been constructed in \cite{Dem10} for any finite group, but in the abelian case Demonet's description can be simplified. We will start with a lemma which is purely about representations of abelian groups.

Let $V=\bigoplus_{i=1}^n V_i$ be a finite-dimensional vector space. Let $G$ act transitively on $N = \{1, \ldots, n\}$ and also linearly on $V$ such that $g(V_i)\subseteq V_{g(i)}$.

Since $G$ acts by automorphisms, it follows that $V_i\cong V_{g(i)}$ and $g(V_i)=V_{g(i)}$.
Let $S\leq G$ be the stabilizer of $1$ (thus $S$ is the stabilizer of $i$ for all $i$ since $G$ is abelian). Then $S$ acts on $V_i$ for all $i$.

By a \emph{generalized permutation matrix} we mean a matrix with exactly one nonzero entry in every row and in every column.

\begin{lemma}\label{lem:perm}
	There is a basis of $V$ such that all the elements of $G$ act in this basis by generalized permutation matrices.
\end{lemma}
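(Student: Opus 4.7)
The plan is to exploit the abelianness of $G$ twice: first, to diagonalise the stabiliser $S$ on $V_1$ (so as to obtain an eigenbasis), and second, to transport that basis along $G$ in a well-defined way. Since $|G|\neq 0$ in $k$, the order of $S$ is also invertible, so the representation of $S$ on $V_1$ is semisimple; because $S$ is abelian with values in $k^\times$ via characters, $V_1$ admits a basis $\{v_1,\dots,v_m\}$ of simultaneous $S$-eigenvectors, say with $s(v_j) = \chi_j(s)v_j$ for every $s\in S$ and some $\chi_j\in S^\vee$.

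Next, I would pick once and for all a transversal $\{g_i\}_{i\in N}$ for $S$ in $G$ with $g_i(1)=i$ and $g_1=1$, and transport the eigenbasis along this transversal by setting
\[
B_i = \{\, g_i(v_1),\dots,g_i(v_m)\,\}\subseteq V_i.
\]
Since $g_i$ is a linear isomorphism $V_1\to V_{i}$, each $B_i$ is a basis of $V_i$, so $B=\bigsqcup_{i\in N} B_i$ is a basis of $V=\bigoplus_i V_i$.

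It remains to check that every $g\in G$ sends $B$ into $k^\times\cdot B$. Take $g_i(v_j)\in B_i$; then $g(g_i(v_j)) = (gg_i)(v_j)$ lies in $V_{g(i)}$, so there exists $s\in S$ with $gg_i = g_{g(i)}\,s$. This is where abelianness enters crucially: the element $s=g_{g(i)}^{-1}gg_i$ lies in $S$ because $G$ is abelian, and moreover $s$ acts on $V_1$ via the characters $\chi_j$. Thus
\[
g(g_i(v_j)) = g_{g(i)}(s(v_j)) = \chi_j(s)\,g_{g(i)}(v_j) \in k^\times \cdot B_{g(i)}.
\]
Hence each $g\in G$ acts in the basis $B$ by a generalized permutation matrix.

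The only subtle point—which I would flag as the main (though modest) obstacle—is making sure that the transport along the transversal interacts correctly with the $S$-action on $V_1$; this is exactly what forces us to use that $G$ is abelian, so that $g_{g(i)}^{-1}gg_i$ truly belongs to $S$ and its eigenvalue on $v_j$ is intrinsic to the vector $v_j$, not to the particular orbit component we started from.
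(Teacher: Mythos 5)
Your proof is correct and is essentially the paper's own argument: diagonalise the abelian stabiliser $S$ on $V_1$ using semisimplicity and one-dimensionality of its irreducible characters, then transport that eigenbasis along a transversal of $G/S$ and check that any $g\in G$ permutes the resulting basis up to the scalars $\chi_j(s)$. One minor remark: the containment $s=g_{g(i)}^{-1}gg_i\in S$ follows simply because this element fixes $1$, with no appeal to commutativity; abelianness is really used earlier, to guarantee that $V_1$ decomposes into one-dimensional $S$-eigenspaces and that $S$ is the stabiliser of every $i\in N$.
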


\begin{proof}
	Write $V_1 = \bigoplus_{t=1}^l \chi_t$ where each $\chi_t$ is an irreducible character of $S$.
	Choose a basis $\{v_1, \ldots, v_l\}$ of $V_1$ with the property that
	$s(v_t)= \chi_t(s)v_t$.
	
	Let $R$ be a set of representatives of $G/S$ in $G$.
	Define $\mathcal B=\{g(v_t)\, |\, t=1, \ldots, l, \, g\in R\}$.
	Each set $\{g(v_t)\, |\, t=1, \ldots, l\}$ is a basis of $V_{g(1)}$, so $\mathcal B$ is a basis of $\bigoplus_{g\in R}V_{g(1)}$. On the other hand, since the action is transitive on $N$, we have that $\bigoplus_{g\in R}V_{g(1)}= V$.
	
	Now let $h\in G$ and $g(v_t)\in \mathcal B$. By definition, $hg = g's$ for some $g'\in R$ and $s\in S$. But then $h(g(v_t)) = \chi_t(s)g'(v_t)$ and $g'(v_t)\in \mathcal B$, which proves the claim.
\end{proof}

\begin{nota}\label{not:itilde}
From now on we fix $\tilde I$ to be a set of representatives of $Q_0$ under the action of $G$.
This choice affects the rest of the paper, but it is inevitable. It corresponds to choosing an idempotent subalgebra of $(kQ)G$ which is Morita equivalent to $(kQ)G$.
 Vertices of $Q$ which are known to belong to $\tilde I$ will be denoted by, for instance, $i_\circ$ and $j_\circ$.
\end{nota}

\begin{nota}\label{not:M}
	We denote by $M_{ij}$ the space generated by the arrows from vertex $i$ to vertex $j$.
	Write $M\subseteq k Q$ for the space generated by all the arrows of $Q$. We write $MG\subseteq (kQ)G$ for the space generated by elements of the form $m\otimes g$ with $m\in M$ and $g\in G$.
\end{nota}

\begin{rmk}\label{rmk:wlog}
	Let $i,j\in Q_0$. Let $N$ be the orbit of $(i,j)\in Q_0\times Q_0$ under the diagonal action of $G$.
	Let $V$ be the $G$-orbit of $M_{ij}$. Then by Lemma~\ref{lem:perm} we can choose a suitable basis of $V$ such that $G$ maps arrows in $V$ to multiples of arrows. By repeating this for every orbit in $Q_0\times Q_0$, we can assume that $G$ maps all arrows in $Q$ to multiples of arrows (as opposed to our initial, a priori weaker, assumption that $G$ preserves the arrowspans). We will without loss of generality make this simplifying assumption in the rest of the article.
\end{rmk}

\begin{nota}\label{not:chi}
By the construction of Lemma~\ref{lem:perm}, to every arrow $a:i\to j$ we can associate a character $\chi_a$ of $G_{ij}$ such that for every $g\in G_{ij}$ we have $g(a) = \chi_a(g)a$.
\end{nota}
\begin{nota}\label{not:q0}
We can now describe the vertices of $Q_G$ (see \cite{Dem10}): they are given by 
$$(Q_G)_0 = \left\{(i_\circ,\rho) \,|\, i_\circ \in \tilde I,\, \rho\in  G^\vee_{i_\circ}\right\}.$$
The idempotent of $(kQ)G$ corresponding to the vertex $(i_\circ,\rho)$ is $e_{i_\circ\rho}= i_\circ \otimes e_\rho$, where $$e_\rho= \frac{1}{|G_{i_\circ}|}\sum_{g\in G_{i_\circ}}\rho(g)g$$ is an idempotent of the group algebra $kG_{i_\circ}$.
\end{nota}

\begin{nota}\label{not:ebar}
In particular, the idempotent 
\[
\bar e = \sum_{i_\circ \in \tilde I}\sum_{\rho \in G^\vee_{i_\circ}} e_{i_\circ \rho} 
\]
of $(k Q)G$ is defined.
\end{nota}

\begin{lemma}
 The idempotent~$\bar e$ is such that $\bar e(k Q)G\bar e$ is basic and Morita equivalent to~$(k Q)G$.
\end{lemma}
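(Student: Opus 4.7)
The plan is to check that $\bar e$ decomposes as the sum of pairwise orthogonal primitive idempotents $e_{i_\circ\rho}$, indexed once per isomorphism class of simple $(kQ)G$-module, and that $\bar e$ is a full idempotent. Morita equivalence then follows from the standard fact that $\bar eB\bar e$ is Morita equivalent to $B$ whenever $B\bar eB=B$, while ``basicness'' follows from the bijection with simples.

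For orthogonality, I would expand $(i_\circ\otimes e_\rho)(i'_\circ\otimes e_{\rho'})$ using the skew rule $(a\otimes g)(b\otimes h)=ag(b)\otimes gh$. The resulting double sum contains the factor $i_\circ\cdot g(i'_\circ)$ in $kQ$, with $g\in G_{i_\circ}$. If $i_\circ\ne i'_\circ$, then $g(i'_\circ)$ lies in the $G$-orbit of $i'_\circ$, which is disjoint from $\{i_\circ\}$, so every term vanishes. If $i_\circ=i'_\circ$, each such $g$ fixes $i_\circ$ and the product collapses to $i_\circ\otimes e_\rho e_{\rho'}$, which equals $\delta_{\rho\rho'}e_{i_\circ\rho}$ by orthogonality of characters in the semisimple abelian group algebra $kG_{i_\circ}$. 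Since $\sum_\rho e_\rho=1$ in $kG_{i_\circ}$, one also sees $i_\circ\otimes 1=\sum_\rho e_{i_\circ\rho}$, so $\bar e$ is indeed an idempotent.

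For fullness it suffices to recover every vertex idempotent $i\otimes 1$ inside the two-sided ideal $(kQ)G\bar e(kQ)G$. Choose $i_\circ\in\tilde I$ in the orbit of $i$ and $g\in G$ with $g(i_\circ)=i$; a direct calculation gives $(i\otimes g)(i_\circ\otimes 1)(1\otimes g^{-1})=i\otimes 1$, which lies in $(kQ)G\bar e(kQ)G$ because $i_\circ\otimes 1\in (kQ)G\bar e$. Summing over $i\in Q_0$ yields $1\in (kQ)G\bar e(kQ)G$, proving that $\bar e$ is full and hence that $\bar e(kQ)G\bar e$ is Morita equivalent to $(kQ)G$.

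To conclude that $\bar e(kQ)G\bar e$ is basic, I would invoke the Clifford-theoretic classification of simple $(kQ)G$-modules valid when $|G|$ is invertible in $k$ (implicit in \cite{RR85} and used explicitly in \cite{Dem10}): they are in natural bijection with pairs $(i_\circ,\rho)$ via the simple top of $(kQ)Ge_{i_\circ\rho}$. Since $\bar e$ is full, $\bar e(kQ)G\bar e$ has the same number of isomorphism classes of simples as $(kQ)G$, namely $\sum_{i_\circ}|G^\vee_{i_\circ}|$, which coincides with the number of nonzero summands in our orthogonal decomposition of $\bar e$. This forces each $e_{i_\circ\rho}$ to be primitive and the projectives $(kQ)Ge_{i_\circ\rho}$ pairwise non-isomorphic, so $\bar e(kQ)G\bar e$ is basic. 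The most delicate ingredient is this Clifford-theoretic identification of simples; an alternative would be to verify directly that $e_{i_\circ\rho}(kQ)Ge_{i_\circ\rho}$ is local by computing its radical quotient, but this is more laborious and ultimately relies on the same input.
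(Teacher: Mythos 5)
Your proposal is correct and follows essentially the same route as the paper, whose proof of this lemma is simply a citation of \cite[Section 2.3]{RR85} and \cite[Theorem 1]{Dem10}: your explicit verifications of orthogonality of the $e_{i_\circ\rho}$ and of fullness of $\bar e$ are accurate elaborations of what those references cover, and your ``most delicate ingredient'' (the Clifford-theoretic identification of the simples with pairs $(i_\circ,\rho)$, i.e.\ the primitivity of the $e_{i_\circ\rho}$ and the pairwise non-isomorphism of the $(kQ)Ge_{i_\circ\rho}$) is exactly the content the paper imports from those sources. The only small remark is that your counting argument for primitivity is superfluous (and on its own would not force each $e_{i_\circ\rho}$ to be primitive), since the cited classification already asserts that each $(kQ)Ge_{i_\circ\rho}$ has simple top.
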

\begin{proof}
 This follows from~\cite[Section 2.3]{RR85} and~\cite[Theorem 1]{Dem10}.
\end{proof}

\begin{rmk}
	The construction of $Q_G$ does depend on the choice of $\tilde I$. Different choices will result in isomorphic quivers, but in different Morita idempotents $\bar e\in (kQ)G$.
\end{rmk}

Let us now describe the set $(Q_G)_1$. Following \cite{Dem10}, we need to fix some more notation.
\begin{nota}\label{not:kappa}
 For each $i \in Q_0$, choose an element $\kappa_i\in G$ such that $\kappa_i (i)\in \tilde{I}$. We fix $\kappa_{i_\circ}= 1$ for each $i_\circ\in \tilde{I}$.
\end{nota}
\begin{nota}\label{not:distinguished}
 For each $i_\circ, j_\circ\in \tilde I$, choose a set $R_{i_\circ j_\circ}$ of representatives of $G(i_\circ)$ under the action of $G_{j_\circ}$.
Note that $\{(i,j_\circ) \,|\, i\in R_{i_\circ j_\circ}\}$ is then a set of representatives of $G(i_\circ)\times G(j_\circ)$ under the diagonal action of $G$.
Let us define, for each $i_\circ, j_\circ \in \tilde I$,  $$D(i_\circ,j_\circ) = \left\{a: i \to j_\circ\in Q_1,\,  i\in R_{i_\circ j_\circ}
\right\}.$$
We will call $D= \bigcup_{i_\circ, j_\circ \in \tilde{I}}D(i_\circ, j_\circ)$ the set of \emph{distinguished arrows} of $Q$.
\end{nota}

\begin{lemma}\label{lem:q1}
	The set of arrows in $Q_G$ from $(i_\circ,\rho)$ to $(j_\circ,\sigma)$ is in bijection with the set 
	$$\left\{a\in D(i_\circ, j_\circ) \, \big\rvert \, \rho|_{G_{i_\circ j_\circ}} = \sigma|_{G_{i_\circ j_\circ}}\chi_a\right\}.$$
\end{lemma}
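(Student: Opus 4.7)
The plan is to identify the arrows of $Q_G$ from $(i_\circ,\rho)$ to $(j_\circ,\sigma)$ with a basis of the arrow space $e_{j_\circ\sigma}(MG)e_{i_\circ\rho}$ of the basic path algebra $\bar e(kQ)G\bar e\cong kQ_G$, and to read this basis off directly. I would first describe $(MG)e_{i_\circ\rho}$: for an arrow $a:x\to y$ and $g\in G$, the element $(a\otimes g)e_{i_\circ\rho}$ vanishes unless $g(i_\circ)=x$, i.e.\ unless $g\in\kappa_x^{-1}G_{i_\circ}$; and the identity $he_\rho=\rho(h)^{-1}e_\rho$ in $kG_{i_\circ}$ for $h\in G_{i_\circ}$ yields $(a\otimes\kappa_x^{-1}h)e_{i_\circ\rho}=\rho(h)^{-1}(a\otimes\kappa_x^{-1})e_{i_\circ\rho}$. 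Hence the elements $(a\otimes\kappa_x^{-1})e_{i_\circ\rho}$ with $x\in G(i_\circ)$ and $a$ an arrow starting at $x$ form a basis of $(MG)e_{i_\circ\rho}$; multiplying on the left by $j_\circ$ restricts to those $a$ with target $j_\circ$.

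The heart of the proof is to compute $E_a:=e_{j_\circ\sigma}(a\otimes\kappa_x^{-1})e_{i_\circ\rho}$ for a distinguished arrow $a:x\to j_\circ$ with $x\in R_{i_\circ j_\circ}$. Expanding $e_{j_\circ\sigma}=\frac{1}{|G_{j_\circ}|}\sum_{l\in G_{j_\circ}}\sigma(l)(j_\circ\otimes l)$ and partitioning $G_{j_\circ}$ into cosets $l_yG_{xj_\circ}$ labelled by $y=l_y(x)\in G_{j_\circ}(x)$, one uses $G_{xj_\circ}=G_{i_\circ j_\circ}$ (valid because $G$ is abelian), together with $m(a)=\chi_a(m)a$ for $m\in G_{i_\circ j_\circ}$ and $me_\rho=\rho(m)^{-1}e_\rho$, to reduce the contribution of each coset to the character sum $\sum_{m\in G_{i_\circ j_\circ}}(\sigma\chi_a\rho^{-1})(m)$. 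By orthogonality this equals $|G_{i_\circ j_\circ}|$ exactly when $\rho|_{G_{i_\circ j_\circ}}=\sigma|_{G_{i_\circ j_\circ}}\chi_a$ and vanishes otherwise, while the outer sum $\sum_y\sigma(l_y)(l_y(a)\otimes l_y\kappa_x^{-1})e_{i_\circ\rho}$ that remains is a combination of distinct basis vectors from the first step with nonzero coefficients. Hence $E_a\neq 0$ precisely when the character condition of the lemma holds.

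Finally, I would verify that the nonzero $E_a$ (for $a$ running over distinguished arrows satisfying the condition) form a basis of $e_{j_\circ\sigma}(MG)e_{i_\circ\rho}$. Linear independence follows because distinct distinguished arrows lie in distinct $G_{j_\circ}$-orbits: if $l(a)$ were proportional to $a'$ for $l\in G_{j_\circ}$ and $a,a'\in D(i_\circ,j_\circ)$ (which forces $\source(a)=\source(a')$ because $R_{i_\circ j_\circ}$ is a set of representatives under $G_{j_\circ}$), then $l\in G_{i_\circ j_\circ}$ and $l(a)=\chi_a(l)a$, whence $a=a'$; the nonzero $E_a$ therefore have disjoint supports in the basis of the first step. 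Spanning follows because every arrow $b:y\to j_\circ$ with $y\in G(i_\circ)$ is a scalar multiple of $l_0(a)$ for some distinguished $a$ and some $l_0\in G_{j_\circ}$, and a computation analogous to the central one shows that $e_{j_\circ\sigma}(b\otimes\kappa_y^{-1})e_{i_\circ\rho}$ is a scalar multiple of $E_a$. The main obstacle is the bookkeeping in the central calculation, where the $G_{j_\circ}$-action moving $a$ to other arrows $l(a)$ and two independent character sums (from $e_\sigma$ and $e_\rho$) must be combined into a single character condition on $G_{i_\circ j_\circ}$; commutativity of $G$ is precisely what makes $G_{xj_\circ}=G_{i_\circ j_\circ}$ for every $x\in G(i_\circ)$ and so makes the condition well-defined.
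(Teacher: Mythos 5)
Your argument is correct, but it takes a genuinely different route from the paper's. The paper proves this lemma by quoting Demonet's representation-theoretic description of the arrows of $Q_G$: the arrows from $(i_\circ,\rho)$ to $(j_\circ,\sigma)$ index a basis of $\bigoplus_{i\in R_{i_\circ j_\circ}}\on{Hom}_{G_{i_\circ j_\circ}}\bigl(\rho|_{G_{i_\circ j_\circ}},\,\sigma|_{G_{i_\circ j_\circ}}\otimes_k M_{ij_\circ}\bigr)$, and then the whole content of the proof is the one-line observation that $\bigoplus_{i\in R_{i_\circ j_\circ}}M_{ij_\circ}\cong\bigoplus_{a\in D(i_\circ,j_\circ)}\chi_a$ as $G_{i_\circ j_\circ}$-modules, after which Schur's lemma picks out exactly the $a$ with $\rho|_{G_{i_\circ j_\circ}}=\sigma|_{G_{i_\circ j_\circ}}\chi_a$. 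You instead compute the space $e_{j_\circ\sigma}(MG)e_{i_\circ\rho}$ directly inside the skew group algebra, using the identities $he_\rho=\rho(h)^{-1}e_\rho$ and $m(a)=\chi_a(m)a$, and extract the condition from the orthogonality relation $\sum_{m\in G_{i_\circ j_\circ}}(\sigma\chi_a\rho^{-1})(m)$. This is essentially the computation the paper carries out later, in the proof of Lemma~\ref{lem:arrow_basis} --- with the difference that there the paper only needs to prove \emph{spanning}, because the dimension count is already supplied by the present lemma via \cite{Dem10}, whereas you must (and do) also establish linear independence via the disjoint-support argument for distinct $G_{j_\circ}$-orbits of distinguished arrows. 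What your route buys is self-containedness and an explicit basis (so it proves Lemma~\ref{lem:arrow_basis} at the same time); what it costs is the heavier coset and character bookkeeping, and it does tacitly use that the arrow space of $Q_G$ is identified with $\bar e(MG)\bar e$ (i.e.\ that $MG$ realizes $\operatorname{rad}/\operatorname{rad}^2$ of $(kQ)G$), a point the paper covers by citing \cite{Dem10} and \cite{RR85}; you should make that citation explicit.
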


\begin{proof}
	By \cite{Dem10}, the set of arrows in $Q_G$ from $(i_\circ,\rho)$ to $(j_\circ,\sigma)$ is in bijection with a basis of
	$$\bigoplus_{i\in R_{i_\circ j_\circ}} \on{Hom}_{G_{i_\circ j_\circ}}\left(\rho|_{G_{i_\circ j_\circ}}, \sigma|_{G_{i_\circ j_\circ}} \otimes_k M_{ij_\circ}\right) =
	\on{Hom}_{G_{i_\circ j_\circ}}\left(\rho|_{G_{i_\circ j_\circ}}, \sigma|_{G_{i_\circ j_\circ}} \otimes_k \bigoplus_{i\in R_{i_\circ j_\circ}} M_{ij_\circ}\right).$$
	Now observe that $$\bigoplus_{i\in R_{i_\circ j_\circ}} M_{ij_\circ} \cong \bigoplus_{a\in D(i_\circ, j_\circ)} \chi_a$$ as $G_{i_\circ j_\circ}$-modules, so that only the arrows $a\in D(i_\circ,j_\circ)$ such that $\rho|_{G_{i_\circ j_\circ}} = \sigma|_{G_{i_\circ j_\circ}}\chi_a$ contribute to the dimension.
\end{proof}

\begin{nota}\label{not:atilde}
	We will denote by $\tilde{a}_{\rho \sigma }$ the arrow (if it exists) from $(i_\circ, \rho)$ to $(j_\circ, \sigma)$ of $Q_G$ corresponding to $a \in D(i_\circ, j_\circ)$.
	
\end{nota}

\section{Explicit isomorphism}\label{sec:iso}
In this section we will expand on Demonet's result, and write an explicit algebra isomorphism $\phi: k Q_G \to \bar e (kQ)G\bar e$.
Let us begin by defining $\phi$ on vertices.
\begin{defin}\label{def:phivertices}
We set 
\[
\phi((i_\circ, \rho))= e_{i_\circ \rho}
\]
for all $i_\circ \in \tilde I$ and $\rho\in  G^\vee_{i_\circ}$. Observe that this is well defined since $e_{i_\circ \rho}$ is a summand of $\bar e$.
\end{defin}

Defining $\phi$ on arrows is the same as choosing a basis of each space
$e_{j_\circ \sigma}MGe_{i_\circ \rho}$.
In order to do this, we make the following definition.
\begin{defin}\label{def:iota}
	We define a function $\iota:kQ \to (kQ)G$ by setting
	\[\iota(p)= \left(1\otimes \kappa_{\target(p)}\right)(p\otimes 1)\left(1\otimes \kappa_{\source(p)}^{-1}\right)
	\]
	for every path $p$ in $Q$, and extending it linearly to $kQ$. In the same way this defines a continuous function  $\iota: \widehat{kQ} \to (\widehat{kQ})G$.
\end{defin}

Remark that $\iota$ is not an algebra morphism, but it has two properties:
\begin{lemma}\label{lem:iota_prop}
	\begin{enumerate}
		\item If $p, q$ are paths in $Q$, and $pq\neq 0$, then $\iota(pq)= \iota(p)\iota(q)$. 
		\item $\iota(kQ)\subseteq \bar e (kQ)G\bar e$. 
	\end{enumerate}
\end{lemma}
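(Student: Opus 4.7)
My plan is to reduce both parts to the multiplication rule in the skew group algebra, $(a\otimes g)(b\otimes h) = a g(b)\otimes gh$, applied to the definition
\[
\iota(p) = (1\otimes \kappa_{\target(p)})(p\otimes 1)(1\otimes \kappa_{\source(p)}^{-1}).
\]
A useful preliminary rewriting is to collapse these three factors into one: since $(1\otimes \kappa_l)(p\otimes 1)(1\otimes \kappa_i^{-1}) = \kappa_l(p)\otimes \kappa_l\kappa_i^{-1}$, we get the closed form $\iota(p) = \kappa_{\target(p)}(p)\otimes \kappa_{\target(p)}\kappa_{\source(p)}^{-1}$. By the simplifying assumption of Remark~\ref{rmk:wlog}, $\kappa_{\target(p)}(p)$ is a scalar multiple of an honest path in $Q$, so this expression is well suited to checking support conditions.

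For part (1), I would set $i=\source(q)$, $j=\source(p)=\target(q)$, $l=\target(p)$, then simply multiply out
\[
\iota(p)\iota(q) = (1\otimes \kappa_l)(p\otimes 1)(1\otimes \kappa_j^{-1})(1\otimes \kappa_j)(q\otimes 1)(1\otimes \kappa_i^{-1}).
\]
The middle pair $(1\otimes \kappa_j^{-1})(1\otimes \kappa_j)$ collapses to $1\otimes 1$, and what remains telescopes to $(1\otimes \kappa_l)(pq\otimes 1)(1\otimes \kappa_i^{-1}) = \iota(pq)$. This is a one-line computation once the closed form is in hand.

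For part (2), I would first observe (as noted in the paragraph before the lemma) that $\bar e = \sum_{i_\circ\in \tilde I} i_\circ\otimes 1$, using that $\sum_{\rho\in G^\vee_{i_\circ}} e_\rho$ equals the identity of $kG_{i_\circ}$ by orthogonality of characters. Consequently, an element $a\otimes g\in (kQ)G$ with $a$ a path satisfies $\bar e(a\otimes g)\bar e = a\otimes g$ precisely when $\target(a)\in \tilde I$ and $g^{-1}(\source(a))\in \tilde I$. Applying this criterion to $\iota(p) = \kappa_l(p)\otimes \kappa_l\kappa_i^{-1}$: the target of (the underlying path of) $\kappa_l(p)$ is $\kappa_l(l)$, which lies in $\tilde I$ by the defining property of $\kappa_l$; and $(\kappa_l\kappa_i^{-1})^{-1}$ sends $\source(\kappa_l(p)) = \kappa_l(i)$ to $\kappa_i(i)\in \tilde I$. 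Both conditions hold, so $\iota(p)\in \bar e(kQ)G\bar e$, and linearity (continuity for $\widehat{kQ}$) extends the statement to all of $kQ$.

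I do not anticipate a genuine obstacle; the only place to be careful is bookkeeping in the verification for part (2), namely making sure one uses Remark~\ref{rmk:wlog} so that $\kappa_l(p)$ is (a scalar multiple of) an actual path with well-defined source and target, and then correctly identifying that source as $\kappa_l(\source(p))$ rather than $\source(p)$. Both parts are purely formal manipulations inside the skew group algebra.
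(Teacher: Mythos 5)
Your proposal is correct and follows essentially the same route as the paper: part (1) is the identical telescoping cancellation of $(1\otimes\kappa_{\source(p)}^{-1})(1\otimes\kappa_{\target(q)})$, and part (2) is the same computation of $\bar e\,\iota(p)\,\bar e$, merely repackaged by first writing $\iota(p)=\kappa_{\target(p)}(p)\otimes\kappa_{\target(p)}\kappa_{\source(p)}^{-1}$ and then checking the support criterion, where the paper expands the double sum over $\tilde I\times\tilde I$ directly and identifies the unique surviving term.
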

\begin{proof}
	\begin{enumerate}
		\item Since $\source(p) = \target(q)$, we have
		\begin{align*}
		\iota(p) \iota(q) &= (1 \otimes \kappa_{\target(p)}) (p \otimes 1) \left(1 \otimes \kappa_{\source(p)}^{-1} \kappa_{\target(q)}\right) (q \otimes 1) \left(1 \otimes \kappa_{\source(q)}^{-1}\right) \\
		&= (1 \otimes \kappa_{\target(p)}) (pq \otimes 1) \left(1 \otimes \kappa_{\source(q)}^{-1}\right) \\
		&= \iota(pq).
		\end{align*}
		\item It is enough to show that $\bar e \iota(p) \bar e = \iota(p)$ for all paths $p$ in $Q$. We first observe that $\bar e$ can be written as $\bar e = \sum_{i_\circ \in \tilde I} i_\circ \otimes 1$. Hence we have
		\begin{align*}
			\bar e \iota(p) \bar e & = \sum_{i_\circ, j_\circ \in \tilde I} (i_\circ \otimes 1) (1 \otimes \kappa_{\target(p)}) (p \otimes 1) \left(1 \otimes \kappa_{\source(p)}^{-1}\right) (j_\circ \otimes 1) \\
			& = \sum_{i_\circ, j_\circ \in \tilde I} (i_\circ \kappa_{\target(p)}(p) \otimes \kappa_{\target(p)}) \left(\kappa_{\source(p)}^{-1}(j_\circ) \otimes \kappa_{\source(p)}^{-1}\right) \\
			& = \sum_{i_\circ, j_\circ \in \tilde I} i_\circ \kappa_{\target(p)}(p) \kappa_{\target(p)} \kappa_{\source(p)}^{-1}(j_\circ) \otimes \kappa_{\target(p)} \kappa_{\source(p)}^{-1} \\
			& = \sum_{i_\circ, j_\circ \in \tilde I} \kappa_{\target(p)}\left(\kappa_{\target(p)}^{-1} (i_\circ) p \kappa_{\source(p)}^{-1}(j_\circ)\right) \otimes \kappa_{\target(p)} \kappa_{\source(p)}^{-1}.
		\end{align*}
		We may note that $\kappa_{\target(p)}^{-1} (i_\circ) p \kappa_{\source(p)}^{-1}(j_\circ)$ is zero unless $\kappa_{\target(p)}^{-1} (i_\circ) = \target(p)$ and $\kappa_{\source(p)}^{-1}(j_\circ) = \source(p)$, in which case 
		$$
		\kappa_{\target(p)}\left(\kappa_{\target(p)}^{-1} (i_\circ) p \kappa_{\source(p)}^{-1}(j_\circ)\right) \otimes \kappa_{\target(p)} \kappa_{\source(p)}^{-1} = \iota(p),$$
		hence the result follows. \qedhere
	\end{enumerate}
\end{proof}

\begin{rmk}
	The choice of $\tilde I$ (or equivalently of $\bar e$) affects the definition of $\iota$ via the choice of the $\kappa_i$.
\end{rmk}

\begin{rmk}
	In general there exists no algebra morphism $kQ\to k Q_G$, but it turns out that the weaker properties of $\iota$ are sufficient for our purposes.
\end{rmk}
\begin{lemma}\label{lem:arrow_basis}
	The set $$
	\left\{(1\otimes e_\sigma)\iota( a)(1\otimes e_\rho)\,|\,  a\in D(i_\circ, j_\circ),\, \chi_ a = \rho|_{G_{i_\circ j_\circ}}\sigma|_{G_{i_\circ j_\circ}}^{-1}
	\right\}$$
	is a basis of $e_{j_\circ \sigma}MGe_{i_\circ \rho}$.
\end{lemma}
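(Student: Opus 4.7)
The plan is to verify three things: each listed element lies in $e_{j_\circ\sigma}MGe_{i_\circ\rho}$, the listed elements are linearly independent, and their number matches the dimension of that space. Containment is immediate from Lemma~\ref{lem:iota_prop}(2): using $\iota(a)\in\bar e(kQ)G\bar e\cap MG$ together with the easy identities $(1\otimes e_\sigma)(j_\circ\otimes 1)=e_{j_\circ\sigma}$ and $(i_\circ\otimes 1)(1\otimes e_\rho)=e_{i_\circ\rho}$, one gets $(1\otimes e_\sigma)\iota(a)(1\otimes e_\rho)=e_{j_\circ\sigma}\iota(a)e_{i_\circ\rho}\in e_{j_\circ\sigma}MGe_{i_\circ\rho}$. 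For the dimension count, Lemma~\ref{lem:q1} shows that the claimed cardinality equals the number of arrows in $Q_G$ from $(i_\circ,\rho)$ to $(j_\circ,\sigma)$, and this in turn equals $\dim e_{j_\circ\sigma}MGe_{i_\circ\rho}$ since under the isomorphism $\bar e(kQ)G\bar e\cong kQ_G$ of basic algebras the length-one component $\bar eMG\bar e$ corresponds to the arrow span of $Q_G$.

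So it remains to show linear independence. I would expand $\iota(a)=a\otimes\kappa_i^{-1}$ (since $\kappa_{j_\circ}=1$ for $a:i\to j_\circ$ with $i\in R_{i_\circ j_\circ}$) and compute
\[
(1\otimes e_\sigma)\iota(a)(1\otimes e_\rho)=\frac{1}{|G_{j_\circ}||G_{i_\circ}|}\sum_{s\in G_{j_\circ},\,t\in G_{i_\circ}}\sigma(s)\rho(t)\,s(a)\otimes s\kappa_i^{-1}t.
\]
The arrows $s(a)$ for $s\in G_{j_\circ}$ all lie in the single $G_{j_\circ}$-orbit of $a$, and Notation~\ref{not:distinguished} tells us that $D(i_\circ,j_\circ)$ is a set of representatives of the $G_{j_\circ}$-orbits of arrows ending at $j_\circ$ with source in $G(i_\circ)$. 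Hence, viewing $MG$ as $\bigoplus_{b\in Q_1}kb\otimes kG$, distinct $a,a'\in D(i_\circ,j_\circ)$ contribute sums with disjoint arrow-supports, reducing linear independence to showing that each element is individually nonzero.

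To establish nonzeroness I would extract the coefficient of the specific monomial $a\otimes\kappa_i^{-1}$ in the above sum. Because $G$ is abelian, the equation $s\kappa_i^{-1}t=\kappa_i^{-1}$ forces $t=s^{-1}$, while $s(a)$ is a scalar multiple of $a$ only for $s\in G_{i_\circ j_\circ}$, in which case $s(a)=\chi_a(s)a$ by Notation~\ref{not:chi}. The coefficient thus reduces to $\tfrac{1}{|G_{j_\circ}||G_{i_\circ}|}\sum_{s\in G_{i_\circ j_\circ}}(\sigma\rho^{-1}\chi_a)(s)$, which by orthogonality of characters of $G_{i_\circ j_\circ}$ is nonzero exactly when $\sigma\rho^{-1}\chi_a$ restricts trivially to $G_{i_\circ j_\circ}$, namely under the hypothesis $\chi_a=\rho|_{G_{i_\circ j_\circ}}\sigma|_{G_{i_\circ j_\circ}}^{-1}$. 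The main obstacle is precisely this last step: one must carefully use the abelian hypothesis, together with the stabilizer and character bookkeeping from Notations~\ref{not:chi}--\ref{not:distinguished}, to ensure that no other pairs $(s,t)$ contribute to the coefficient of $a\otimes\kappa_i^{-1}$.
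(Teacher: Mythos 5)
Your proof is correct, and while it shares the paper's first step --- using Demonet's isomorphism together with Lemma~\ref{lem:q1} to see that the proposed set has cardinality equal to $\dim e_{j_\circ\sigma}MGe_{i_\circ\rho}$ --- it completes the argument by the opposite half of the basis criterion. The paper proves \emph{spanning}: it starts from the obvious generating set $\{e_{j_\circ\sigma}(a\otimes e_\chi)e_{i_\circ\rho}\}$ and, through a chain of reductions (first to $\target(a)=j_\circ$ and $\source(a)\in G(i_\circ)$, then modulo the $G_{j_\circ}$-action to $\source(a)\in R_{i_\circ j_\circ}$, then imposing the character condition), shows every generator is a scalar multiple of a listed element. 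You instead prove \emph{linear independence} directly, by expanding $(1\otimes e_\sigma)(a\otimes\kappa_i^{-1})(1\otimes e_\rho)$ in the monomial basis of $MG=\bigoplus_{b\in Q_1}kb\otimes kG$, observing that distinct distinguished arrows have disjoint supports, and extracting the coefficient of $a\otimes\kappa_i^{-1}$, which by character orthogonality on $G_{i_\circ j_\circ}$ is nonzero precisely under the hypothesis $\chi_a=\rho|_{G_{i_\circ j_\circ}}\sigma|_{G_{i_\circ j_\circ}}^{-1}$. Your route is shorter and more self-contained; the paper's route has the advantage that its reduction formulas (expressing arbitrary generators in terms of the $\iota(a)$) are reused verbatim in Lemma~\ref{lem:trifoglio} and in the degree $-2$ part of the proof of Theorem~\ref{thm:main}. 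One small point you should make explicit: the disjointness of supports is not literally the content of Notation~\ref{not:distinguished}; that $D(i_\circ,j_\circ)$ is a set of representatives of the $G_{j_\circ}$-orbits of arrows into $j_\circ$ with source in $G(i_\circ)$ requires the extra observation (via Notation~\ref{not:chi}) that an element of $G_{j_\circ}$ sending $a$ to a multiple of an arrow with the same source must lie in $G_{i_\circ j_\circ}$ and hence acts on $a$ by the scalar $\chi_a$, so two distinct arrows with the same source in $R_{i_\circ j_\circ}$ cannot lie in the same orbit. With that noted, the argument is complete.
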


\begin{proof}
	By \cite{Dem10}, there is an isomorphism of algebras $kQ_G\cong \bar e (KQ)G\bar e$ mapping $(i_\circ, \rho)$ to $e_{i_\circ\rho}$ and $(j_\circ, \sigma)$ to $e_{j_\circ \sigma}$.
	In particular, arrows in $Q_G$ from $(i_\circ, \rho)$ to $(j_\circ, \sigma)$ are in bijection with a basis of $e_{j_\circ \sigma}MGe_{i_\circ \rho}$. On the other hand, such arrows are in bijection with the set of the statement by Lemma~\ref{lem:q1}. Therefore it is enough to show that this set generates $V= e_{j_\circ \sigma}MGe_{i_\circ \rho}$ as a vector space.
	
	The set $\left\{ e_{j_\circ \sigma}( a\otimes e_\chi)e_{i_\circ \rho}\, | \,  a \in Q_1, \, \chi \in G^\vee\right\}$ generates $V$ by definition. 
	Now $(j_\circ \otimes e_\sigma)(a\otimes e_\chi)(i_\circ \otimes e_\rho) = 0$ unless $j_\circ a\neq 0$, so we can assume that $\target(a) = j_\circ$. 
	Then 
	\begin{align*}
	(j_\circ \otimes e_\sigma)(a\otimes e_\chi)(i_\circ \otimes e_\rho) = 
	\frac{1}{|G|}\sum_{g\in G} (1\otimes e_\sigma)(a g(i_\circ)\otimes g \chi(g)e_\rho),
	\end{align*}
	which is 0 unless $\source(a)= i$ for some $i\in G(i_\circ)$. 
	In this case, recalling that $i_\circ = \kappa_i(i)$, the above expression equals
	\begin{align*}
	& \frac{1}{|G|}\sum_{g \in G, g(i_\circ)=i} (1\otimes e_\sigma)(a \otimes g \chi(g)e_\rho) \\
	& = \frac{1}{|G|}\sum_{g \kappa_i \in G_{i_\circ}} (1\otimes e_\sigma)(a \otimes g \chi(g)e_\rho) \\
	& = \frac{1}{|G|}\sum_{g \in G_{i_\circ}} (1\otimes e_\sigma)\left(a\otimes \kappa_i^{-1}
	g\chi(g)\chi(\kappa_i^{-1})e_\rho\right) \\
	& = \chi(\kappa_i^{-1})\frac{|G_{i_\circ}|}{|G|} 
	(1\otimes e_\sigma)\iota(a)(1\otimes e_{\rho}e_{\chi|_{G_{i_\circ}}}) .
	\end{align*}
	The latter is either zero (if $\rho \neq \chi|_{G_{i_\circ}}$) or a scalar multiple of $(1\otimes e_\sigma)\iota( a)(1\otimes e_\rho)$. 
	Since there exists at least one $\chi \in G^\vee$ such that $\rho = \chi|_{G_{i_\circ}}$, we conclude that the set 
	$$
	\left\{(1\otimes e_\sigma)\iota( a)(1\otimes e_\rho)\,|\,  a: i \to j_\circ\in Q_1, \, i \in G(i_\circ)
	\right\}$$
	generates $V$.
	
	Now observe that the action of $G_{j_\circ}$ stabilizes the above set, up to scalars: if $g\in G_{j_\circ}$ and $g(a) =\xi b$ for some arrow $b$, we have
	\begin{align*}
	(1\otimes e_\sigma)\iota(g(a))(1\otimes e_\rho)&= 
	\xi (1\otimes e_\sigma)(b\otimes 1)(1\otimes \kappa_{\source(b)}e_\rho) \\
	&= \xi (1\otimes e_\sigma)\iota(b)(1\otimes e_\rho).
	\end{align*}
	So it is enough, in order to generate $V$, to take a set of representatives of arrows modulo the action of $G_{j_\circ}$, i.e.~we can assume $\source(a)\in R_{i_\circ j_\circ}$. 
	
	Finally, notice that 
	\begin{align*}
	(1\otimes e_\sigma)\iota(a)(1\otimes e_\rho)&=
	(1\otimes e_\sigma e_{\sigma|_{G_{i_\circ j_\circ}}})
	(a\otimes 1)(1\otimes \kappa_{\source(a)})(1\otimes e_{\rho|_{G_{i_\circ j_\circ}}}e_\rho)  \\
	&= \frac{1}{|G_{i_\circ j_\circ}|}\sum_{g\in G_{i_\circ j_\circ}} (1\otimes e_\sigma)(g(a)\otimes 
	g\sigma(g)e_{\rho|_{G_{i_\circ j_\circ}}})(1\otimes e_\rho)(1\otimes \kappa_{\source(a)}) \\
	&= \frac{1}{|G_{i_\circ j_\circ}|}\sum_{g\in G_{i_\circ j_\circ}} (1\otimes e_\sigma)(a\otimes 
	g\chi_a(g)\sigma(g)e_{\rho|_{G_{i_\circ j_\circ}}})(1\otimes e_\rho)(1\otimes \kappa_{\source(a)}) \\
	&= (1\otimes e_\sigma)(a\otimes e_{\chi_a \sigma|_{G_{i_\circ j_\circ}}}e_{\rho|_{G_{i_\circ j_\circ}}})
	(1\otimes e_\rho)(1\otimes \kappa_{\source(a)}).
	\end{align*}
	Thus $a$ must be such that $\chi_a = \rho|_{G_{i_\circ j_\circ}}\sigma|_{G_{i_\circ j_\circ}}^{-1}$ as claimed.
\end{proof}
\begin{defin}\label{def:phiarrows}
	Recall that for every arrow $\tilde a_{\rho\sigma }$ of $Q_G$, there exists a unique corresponding distinguished arrow $a\in Q$. So we can set
	\[
	\phi(\tilde{a}_{\rho\sigma}) = (1\otimes e_\sigma)\iota(a)(1\otimes e_\rho).
	\]
	Thus $\phi$ gives an isomorphism of vector spaces $k (Q_G)_1\cong \bar eMG\bar e$ (it is a bijection between bases), compatible with the definition of $\phi$ on vertices.
	We extend it uniquely to a morphism of algebras $\phi:k Q_G \to \bar e(k Q)G\bar e$, which is then also an isomorphism.
	As we did for $\iota$, we can also define in the same way a continuous algebra isomorphism $\phi:\widehat{k Q_G}\to \bar e(\widehat{k Q})G\bar e$.
\end{defin}

There is also another ``dual'' basis of $e_{j_\circ \sigma}MGe_{i_\circ \rho}$ which we will use:
\begin{lemma}
	\label{lem:trifoglio}
	The set $$
	\left\{(1\otimes e_\sigma)\iota( b)(1\otimes e_\rho)\,| \,  b: i_\circ \to j\in Q_1, \, j\in R_{j_\circ i_\circ}, \, \chi_b = \rho|_{G_{i_\circ j_\circ}}\sigma|_{G_{i_\circ j_\circ}}^{-1}
	\right\}$$
	is a basis of $e_{j_\circ \sigma}MGe_{i_\circ \rho}$.
\end{lemma}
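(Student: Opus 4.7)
The plan is to mirror the proof of Lemma~\ref{lem:arrow_basis} with the roles of source and target interchanged throughout. First, I would verify that the cardinality of the claimed set equals $\dim V$, where $V := e_{j_\circ \sigma}MGe_{i_\circ \rho}$. This can be done either by rerunning the proof of Lemma~\ref{lem:q1} with representatives of $G(j_\circ)$ under the action of $G_{i_\circ}$ in place of representatives of $G(i_\circ)$ under $G_{j_\circ}$ (the same homological computation in Demonet's description goes through symmetrically), or by constructing an explicit bijection between distinguished arrows parametrized by target and distinguished arrows parametrized by source, each orbit of $G$ on $Q_0 \times Q_0$ contributing once in either parametrization. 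Consequently it suffices to show that the given set spans $V$.

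As in Lemma~\ref{lem:arrow_basis}, $V$ is spanned by the elements $(j_\circ \otimes e_\sigma)(a \otimes e_\chi)(i_\circ \otimes e_\rho)$ for $a \in Q_1$ and $\chi \in G^\vee$. I would then repeat the calculation of Lemma~\ref{lem:arrow_basis} in the dual direction, absorbing the \emph{left} idempotent $e_\sigma$ using its expansion over $G_{j_\circ}$ instead of absorbing $e_\rho$ on the right. The analogous reasoning forces $\target(a) \in G(j_\circ)$, and rewriting $a$ as $\kappa_j^{-1}(b)$ where $b : i_\circ \to j$ with $j := \kappa_{\target(a)}(\target(a))$ produces an expression of the form $\lambda \cdot (1 \otimes e_\sigma)\iota(b)(1 \otimes e_\rho)$. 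Here I use that $\iota(b) = (1 \otimes \kappa_{\target(b)})(b \otimes 1)$ because $\kappa_{i_\circ} = 1$ by Notation~\ref{not:kappa}, which is the mirror of the simplification $\iota(a) = (a \otimes 1)(1 \otimes \kappa_{\source(a)}^{-1})$ used in Lemma~\ref{lem:arrow_basis}.

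Next, I would show that the $G_{i_\circ}$-action on arrows with source $i_\circ$ stabilizes this generating set up to scalars: for $g \in G_{i_\circ}$ with $g(b) = \xi b'$, a short computation analogous to the one at the end of the proof of Lemma~\ref{lem:arrow_basis} gives $(1 \otimes e_\sigma)\iota(g(b))(1 \otimes e_\rho) = \xi \cdot (1 \otimes e_\sigma)\iota(b')(1 \otimes e_\rho)$. This allows me to assume $\target(b) \in R_{j_\circ i_\circ}$. Finally, absorbing $e_\rho$ into the arrow via the $G_{i_\circ j_\circ}$-action, exactly as in the last display of the proof of Lemma~\ref{lem:arrow_basis}, yields a factor of $e_{\chi_b \sigma|_{G_{i_\circ j_\circ}}\rho|_{G_{i_\circ j_\circ}}^{-1}}$, which vanishes unless the stated character condition $\chi_b = \rho|_{G_{i_\circ j_\circ}}\sigma|_{G_{i_\circ j_\circ}}^{-1}$ holds.

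The main obstacle I expect is the bookkeeping of characters and $\kappa$-factors when the idempotent is absorbed from the opposite side: specifically, one must verify that the character condition emerges in the same form as in Lemma~\ref{lem:arrow_basis} rather than in an inverted or shifted form. This ultimately rests on the symmetry of the definition of $\chi_b$ with respect to $G_{i_\circ j_\circ} = G_{i_\circ} \cap G_{j_\circ}$, which acts compatibly whether $b$ is being absorbed into a right idempotent indexed by $\rho$ or a left idempotent indexed by $\sigma$.
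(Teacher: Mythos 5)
Your overall architecture (match cardinalities, then prove spanning) is the same as the paper's, and your cardinality step is fine: the paper gets it from the orbit-counting identity $|R_{i_\circ j_\circ}| = |G||G_{i_\circ j_\circ}|/(|G_{i_\circ}||G_{j_\circ}|) = |R_{j_\circ i_\circ}|$ together with the vanishing of $(1\otimes e_\sigma)\iota(b)(1\otimes e_\rho)$ when $\chi_b\neq \rho|_{G_{i_\circ j_\circ}}\sigma|_{G_{i_\circ j_\circ}}^{-1}$. The gap is in your spanning step. The ``dual absorption'' is not a mirror image of Lemma~\ref{lem:arrow_basis}, because the skew multiplication $(x\otimes g)(y\otimes h)=xg(y)\otimes gh$ only lets group elements act on factors to their \emph{right}. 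Absorbing $e_\sigma$, which is supported on $G_{j_\circ}$, from the left of $(a\otimes e_\chi)$ produces terms $j_\circ h(a)$ with $h\in G_{j_\circ}$, and since $G_{j_\circ}$ stabilizes $j_\circ$ this pins $\target(a)=j_\circ$ exactly; it does not let $\target(a)$ range over $G(j_\circ)$. (The reason the source ranges over all of $G(i_\circ)$ in Lemma~\ref{lem:arrow_basis} is that it is $e_\chi$, a sum over all of $G$, that hits $i_\circ$ from the left.) Consequently your rewriting is vacuous: with $\target(a)\in G(j_\circ)$ one has $\kappa_{\target(a)}(\target(a))=j_\circ$ by Notation~\ref{not:kappa}, so your $j$ always equals $j_\circ$ and $b$ is essentially $a$ again; you never reach arrows with target in $R_{j_\circ i_\circ}$.

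What is missing is the step that actually moves the arrow. One fix is to commute the group element past the arrow, $m\otimes g=(1\otimes g)(g^{-1}(m)\otimes 1)$, so that the right idempotent $i_\circ\otimes e_\rho$ forces $\source(g^{-1}(m))=i_\circ$ while the target lands in $G(j_\circ)$, and then reduce to $R_{j_\circ i_\circ}$ by the $G_{i_\circ}$-action. The paper instead avoids re-deriving the spanning set altogether: it uses Lemma~\ref{lem:arrow_basis} as a black box, writes each $a:i\to j_\circ$ in $D(i_\circ,j_\circ)$ as $a=\kappa_i^{-1}h(b)$ with $b:i_\circ\to j$, $j\in R_{j_\circ i_\circ}$ and $h\in G_{i_\circ}$, notes that $\kappa_i^{-1}h=l\kappa_j$ for some $l\in G_{j_\circ}$, and computes $(1\otimes e_\sigma)\iota(a)(1\otimes e_\rho)=\sigma(l^{-1})\rho(h)(1\otimes e_\sigma)\iota(b)(1\otimes e_\rho)$, which exhibits the known basis inside the span of the new set in a few lines. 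Your final remark about the origin of the character condition is correct in substance, though it is the product $e_{\chi_b\sigma|_{G_{i_\circ j_\circ}}}e_{\rho|_{G_{i_\circ j_\circ}}}$ that vanishes unless $\chi_b=\rho|_{G_{i_\circ j_\circ}}\sigma|_{G_{i_\circ j_\circ}}^{-1}$, not a single idempotent.
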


\begin{proof}
	First observe that by the well-known orbit-counting lemma we have
	\[
	|R_{i_\circ j_\circ}| = \frac{|G||G_{i_\circ j_\circ}|}{|G_{i_\circ}||G_{j_\circ}|}	= |R_{j_\circ i_\circ}|.
	\]
	Moreover, by a similar computation as in the end of the proof of Lemma~\ref{lem:arrow_basis}, we can write
	\begin{align*}
	(1\otimes e_\sigma)\iota(b)(1\otimes e_\rho)=
	(1\otimes e_\sigma)(1\otimes \kappa_{\target(b)})(b\otimes e_{\chi_b \sigma|_{G_{i_\circ j_\circ}}}e_{\rho|_{G_{i_\circ j_\circ}}})
	(1\otimes e_\rho),
	\end{align*}
	so that $(1\otimes e_\sigma)\iota(b)(1\otimes e_\rho)$ is 0 unless $\chi_b = \rho|_{G_{i_\circ j_\circ}}\sigma|_{G_{i_\circ j_\circ}}^{-1}$.
	
	Taken together, these two observations imply that the set in the statement has the correct cardinality. We shall show that every element of the form 
	$$(1\otimes e_\sigma)\iota( a)(1\otimes e_\rho)$$
	such that $a\in D(i_\circ, j_\circ)$ is a scalar multiple of an element in this set, and this implies the statement by Lemma~\ref{lem:arrow_basis}.
	
	First we remark that any $a:i\to j_\circ$ can be written as 
	$a = \kappa_i^{-1} h (b)$, where:
	\begin{itemize}
		\item $b: i_\circ \to j$ is an arrow with $ j\in R_{j_\circ i_\circ}$,
		\item $h\in G_{i_\circ}$.
	\end{itemize}
    Moreover, as $\kappa_i^{-1} h$ maps $j$ to $j_\circ$, we must have 
    $\kappa_i^{-1} h = l\kappa_{j}$ for some $l\in G_{j_\circ}$.
    We obtain
    \begin{align*}
    	(1\otimes e_\sigma)\iota( a)(1\otimes e_\rho) &= 
    	(1\otimes e_\sigma)( a\otimes 1)\left(1\otimes \kappa_i^{-1}\right)(1\otimes e_\rho)\\
    	& =(1\otimes e_\sigma)\left(1\otimes \kappa_i^{-1}\right)( h(b)\otimes 1)(1\otimes e_\rho)\\
    	&= (1\otimes e_\sigma)\left(1\otimes \kappa_i^{-1}h\right)( b\otimes 1)\left(1\otimes e_\rho h^{-1}\right)\\
    	&= (1\otimes e_\sigma l)(1\otimes \kappa_j)( b\otimes 1)\left(1\otimes e_\rho h^{-1}\right) \\
    	&= \sigma(l^{-1})\rho(h) (1\otimes e_\sigma)\iota( b)(1\otimes e_\rho).
    \end{align*}
    This concludes the proof.
\end{proof}

The main advantage of defining $\phi$ with the help of $\iota$ is that we can carry out explicit computations relatively easily. In particular, we can express the image via $\iota$ of a path in $Q$ as an explicit linear combination of paths in $Q_G$. We show such formulas in the next results.

\begin{lemma}\label{lem:iota(arrow)}
	Let $ a$ be a distinguished arrow in $Q$. Let $g\in G$ be such that $b = g^{-1}( a)$ is an arrow. Then 
	\begin{align*}
	\phi^{-1}\iota(b) = \sum_{\rho\in G_{\source(b)}^\vee, \, \sigma\in G_{\target(b)}^\vee} \sigma\left(g\kappa_{\target(b)}^{-1}\right)\rho\left(g^{-1}\kappa_{\source(a)}^{-1}\kappa_{\source(b)}\right)\tilde  a_{\rho \sigma}.
	\end{align*}
\end{lemma}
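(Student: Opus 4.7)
The plan is to rewrite $\iota(b)$ as a conjugate of $\iota(a)$ by certain group elements, and then expand this conjugate into the basis from Lemma~\ref{lem:arrow_basis} using the idempotents $e_\rho, e_\sigma$ of the group algebras of the stabilizers.

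First, I would observe that since $b = g^{-1}(a)$, one has $b \otimes 1 = (1 \otimes g^{-1})(a \otimes 1)(1 \otimes g)$ in $(kQ)G$. Plugging this into the definition of $\iota(b)$ and using $\kappa_{\target(a)} = 1$ (since $a$ is distinguished, so $\target(a) \in \tilde I$), I obtain
$$\iota(b) = (1 \otimes h_1)\,\iota(a)\,(1 \otimes h_2),$$
where $h_1 = \kappa_{\target(b)} g^{-1}$ and $h_2 = \kappa_{\source(a)} g \kappa_{\source(b)}^{-1}$. A direct check shows that $h_1 \in G_{j_\circ}$ and $h_2 \in G_{i_\circ}$, where $j_\circ = \target(a)$ and $i_\circ$ is the representative of the $G$-orbit of $\source(a)$, since each of these products visibly fixes the corresponding vertex of $\tilde I$.

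Next, by Lemma~\ref{lem:iota_prop}(2), $\iota(b) = \bar e\, \iota(b)\, \bar e$, and among the summands of $\bar e$ only those indexed by $(j_\circ, \sigma)$ on the left and $(i_\circ, \rho)$ on the right give a nonzero contribution, since $\source(b) \in G(i_\circ)$ and $\target(b) \in G(j_\circ)$. Hence
$$\iota(b) = \sum_{\rho \in G^\vee_{i_\circ},\, \sigma \in G^\vee_{j_\circ}} (1 \otimes e_\sigma)\,\iota(b)\,(1 \otimes e_\rho).$$
Substituting the previous expression for $\iota(b)$ and using the standard identities $e_\sigma h_1 = \sigma(h_1^{-1}) e_\sigma$ and $h_2 e_\rho = \rho(h_2^{-1}) e_\rho$ (immediate from the definition of $e_\rho, e_\sigma$ as character-weighted averages, valid precisely because $h_1, h_2$ lie in the relevant stabilizers) transforms each term into
$$\sigma(h_1^{-1})\,\rho(h_2^{-1})\,(1 \otimes e_\sigma)\,\iota(a)\,(1 \otimes e_\rho).$$

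Finally, by Definition~\ref{def:phiarrows} we have $(1 \otimes e_\sigma)\iota(a)(1 \otimes e_\rho) = \phi(\tilde a_{\rho\sigma})$ whenever $\tilde a_{\rho\sigma}$ exists, while the quantity vanishes otherwise (by the compatibility condition $\chi_a = \rho|_{G_{i_\circ j_\circ}} \sigma|_{G_{i_\circ j_\circ}}^{-1}$ computed at the end of the proof of Lemma~\ref{lem:arrow_basis}). Applying $\phi^{-1}$ and inserting the explicit values $h_1^{-1} = g \kappa_{\target(b)}^{-1}$ and $h_2^{-1} = \kappa_{\source(b)} g^{-1} \kappa_{\source(a)}^{-1}$, and rewriting the latter as $g^{-1}\kappa_{\source(a)}^{-1}\kappa_{\source(b)}$ inside $\rho$ (which is legitimate since $G_{i_\circ}$ is abelian and $\rho$ is a character), yields exactly the stated formula. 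The argument is entirely bookkeeping; the only place where care is required is verifying that $h_1$ and $h_2$ land in $G_{j_\circ}$ and $G_{i_\circ}$ respectively, which is what legitimises the character identities used in the key simplification.
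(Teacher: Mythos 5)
Your proof is correct and follows essentially the same route as the paper's: both arguments commute $g^{-1}$ past $a\otimes 1$ to write $\iota(b)$ as $(1\otimes h_1)\iota(a)(1\otimes h_2)$ with $h_1=\kappa_{\target(b)}g^{-1}\in G_{j_\circ}$ and $h_2=\kappa_{\source(a)}g\kappa_{\source(b)}^{-1}\in G_{i_\circ}$, and then expand these stabilizer elements over the idempotents $e_\sigma,e_\rho$ to land in the basis of Lemma~\ref{lem:arrow_basis}. The only cosmetic difference is that the paper expands $1\otimes h$ directly as $\sum_\sigma\sigma(h^{-1})(1\otimes e_\sigma)$ rather than inserting $\sum_\sigma e_\sigma=1$ and commuting, which is the same computation.
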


\begin{proof}
	Observe first that $g\kappa_{\target(b)}^{-1}\in G_{\target(b)}$ and that $g^{-1}\kappa_{\source(a)}^{-1}\kappa_{\source(b)}\in G_{\source(b)}$. We have
	\begin{align*}
	\iota(b) &= 
	\iota(g^{-1}( a))  \\
	&=(1\otimes \kappa_{\target(b)})(g^{-1}( a)\otimes 1)\left(1\otimes \kappa_{\source(b)}^{-1}\right)  \\
	&=\left(1\otimes \kappa_{\target(b)}g^{-1}\right)( a\otimes 1)\left(1\otimes \kappa_{\source(a)}^{-1}\right)\left(1\otimes \kappa_{\source(b)}^{-1}\kappa_{\source(a)} g \right)\\
	&= \sum_{\rho\in G_{\source(b)}^\vee, \, \sigma\in G_{\target(b)}^\vee}\sigma\left(g\kappa_{\target(b)}^{-1}\right)\rho\left(g^{-1}\kappa_{\source(a)}^{-1}\kappa_{\source(b)}\right)(1\otimes e_\sigma)\iota( a)(1\otimes e_\rho),
	\end{align*}
	and the claim follows by applying $\phi^{-1}$.
\end{proof}

To use this formula on potentials, it is convenient to compute it for cycles. In fact, in the proof of Theorem~\ref{thm:main} we will only use the formula of Lemma~\ref{lem:iota(arrow)}.
However, one could say that the main new tool this article introduces is given by the formulas of Proposition~\ref{prop:iota(path)}. While these are not needed to prove our result, they are what one uses in practice to compute examples (as we illustrate in Section~\ref{sec:example}). 
Before proving them we shall introduce some additional notation.

\begin{nota}\label{not:cyc}
	We set $\cyc:kQ\to kQ$ to be the linear map defined on a path $p$ as $\cyc(p) = \sum_{i\in Q_0} ipi$. As usual, we extend it to a continuous map $\cyc: \widehat{k Q}\to \widehat{k Q}$.
\end{nota}

Now we use the previous lemma to write the formula for $\iota$ of an arbitrary path, and that of $\cyc \iota$ for a cycle. 

\begin{prop}\label{prop:iota(path)}
	Let $b_n\cdots b_1$ be a nonzero path in $Q$ (with the $b_i$ arrows), and choose for each $i= 1, \dots, n$ an element $g_i\in G$ such that $g_i(b_i)= a_i \in D$.
	We call $T= \left(\prod_{i=1}^{n} G_{\source(b_i)}^\vee\right) \times G_{\target(b_n)}^\vee$ and use the notation $\underline \sigma = (\sigma_1, \dots, \sigma_{n+1})$ to denote elements in $T$.
	Then
	\begin{align*}
	\phi^{-1}\iota(b_n\cdots b_1) &= 
	\sum_{\underline \sigma \in T}
	\sigma_{n+1}\left(g_n\kappa_{\target(b_n)}^{-1}\right)\sigma_1\left(g_1^{-1}\kappa_{\source(a_1)}^{-1}\kappa_{\source(b_1)}\right)\cdot \\
	&\cdot\prod_{i= 2}^{n} \sigma_i\left(g_i^{-1}g_{i-1}\kappa_{\source(a_i)}^{-1}\right)
	\tilde a_{n,\sigma_n \sigma_{n+1}}\tilde a_{n-1, \sigma_{n-1}\sigma_n}\cdots
	\tilde a_{1, \sigma_2\sigma_1}.
	\end{align*}
	In particular, if $b_n\cdots b_1$ is a cycle, we have the nicer formula
	\begin{align*}
	\cyc(\phi^{-1}\iota(b_n\cdots b_1)) = 
	\sum_{\underline \sigma \in \prod_{i=1}^{n} G_{\source(b_i)}^\vee}
	\prod_{i= 1}^{n} \sigma_i\left(g_i^{-1}g_{i-1}\kappa_{\source(a_i)}^{-1}\right)
	\tilde a_{n,\sigma_n \sigma_{1}}\tilde a_{n-1, \sigma_{n-1}\sigma_n}\cdots
	\tilde a_{1, \sigma_2\sigma_1},
	\end{align*}
	with the convention $g_0= g_n$.
\end{prop}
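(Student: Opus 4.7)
The plan is to build from the single-arrow formula of Lemma~\ref{lem:iota(arrow)} using the multiplicativity property of Lemma~\ref{lem:iota_prop}(1). Since $b_n\cdots b_1\neq 0$, one has $\iota(b_n\cdots b_1)=\iota(b_n)\iota(b_{n-1})\cdots\iota(b_1)$, and applying the continuous algebra isomorphism $\phi^{-1}$ expresses $\phi^{-1}\iota(b_n\cdots b_1)$ as the product of the $\phi^{-1}\iota(b_i)$ in $\widehat{kQ_G}$. Expanding each factor via Lemma~\ref{lem:iota(arrow)} yields a sum indexed by $(\rho_i,\tau_i)\in G_{\source(b_i)}^\vee\times G_{\target(b_i)}^\vee$ of scalar multiples of $\tilde a_{i,\rho_i\tau_i}$.

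Multiplying these expansions out, the composition $\tilde a_{i,\rho_i\tau_i}\,\tilde a_{i-1,\rho_{i-1}\tau_{i-1}}$ requires the idempotents at the shared vertex of $Q_G$ to agree, which forces $\tau_{i-1}=\rho_i$ (the orbit labels $\target(a_{i-1})_\circ$ and $\source(a_i)_\circ$ coincide automatically since $\source(b_i)=\target(b_{i-1})$). Relabelling $\sigma_1=\rho_1$ and $\sigma_{i+1}=\tau_i=\rho_{i+1}$ for $i\ge 1$ exhibits the surviving sum as being indexed by $\underline\sigma\in T$. It remains to collect coefficients: the endpoint scalars $\sigma_{n+1}(g_n\kappa_{\target(b_n)}^{-1})$ and $\sigma_1(g_1^{-1}\kappa_{\source(a_1)}^{-1}\kappa_{\source(b_1)})$ come directly from Lemma~\ref{lem:iota(arrow)} applied to $b_n$ and $b_1$. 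For $2\le i\le n$, the character $\sigma_i$ appears twice, as the source character in the expansion of $\iota(b_i)$ contributing $\sigma_i(g_i^{-1}\kappa_{\source(a_i)}^{-1}\kappa_{\source(b_i)})$, and as the target character in the expansion of $\iota(b_{i-1})$ contributing $\sigma_i(g_{i-1}\kappa_{\target(b_{i-1})}^{-1})$. Using $\kappa_{\source(b_i)}=\kappa_{\target(b_{i-1})}$, these contributions collapse into the single factor $\sigma_i(g_i^{-1}g_{i-1}\kappa_{\source(a_i)}^{-1})$, producing the first displayed formula.

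For the cyclic statement, I would apply $\cyc$ to the formula just obtained. A summand indexed by $\underline\sigma$ contributes to $\cyc$ only when its underlying word in $Q_G$ is itself a cycle, which (using $\target(b_n)=\source(b_1)$) holds precisely when $\sigma_{n+1}=\sigma_1$. Imposing this identification and using $\kappa_{\target(b_n)}=\kappa_{\source(b_1)}$ once more, the two endpoint scalars fuse into $\sigma_1(g_ng_1^{-1}\kappa_{\source(a_1)}^{-1})$, which matches the $i=1$ factor of the cyclic formula under the convention $g_0=g_n$. The main obstacle throughout is the combinatorial bookkeeping: tracking which $\kappa$'s cancel at each index and verifying that the character constraint from Lemma~\ref{lem:q1} is automatically compatible with the source/target matching enforced in the product. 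Once one uses $\source(b_i)=\target(b_{i-1})$ systematically, each of these cancellations is routine.
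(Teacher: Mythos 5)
Your proposal is correct and follows essentially the same route as the paper's proof: factor $\iota(b_n\cdots b_1)$ via Lemma~\ref{lem:iota_prop}(1), expand each factor with Lemma~\ref{lem:iota(arrow)}, kill the terms with mismatched characters at the shared vertices, and merge the two $\sigma_i$-scalars at each interior index using $\kappa_{\source(b_i)}=\kappa_{\target(b_{i-1})}$; the cyclic case is handled identically by imposing $\sigma_{n+1}=\sigma_1$ after applying $\cyc$. No gaps.
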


\begin{proof}
	Using Lemma~\ref{lem:iota_prop}(1) and Lemma~\ref{lem:iota(arrow)} we have
	\begin{align*}
	\iota(b_n \cdots b_1) & = \iota(b_n) \cdots \iota(b_1) \\
	& = \iota\left(g_n^{-1}(a_n)\right) \cdots \iota\left(g_1^{-1}(a_1)\right) \\
	& = \sum_{\substack{\sigma_n \in G_{\source(b_n)}^\vee \\ \sigma_n' \in G_{\target(b_n)}^\vee}}
	\sigma_n'\left(g_n \kappa_{\target(b_n)}^{-1}\right) \sigma_n\left(g_n^{-1} \kappa_{\source(a_n)}^{-1} \kappa_{\source(b_n)}\right)
	\tilde{a}_{n, \sigma_n \sigma_n'} \cdot \\
	& \cdot \sum_{\substack{\sigma_{n-1} \in G_{\source(b_{n-1})}^\vee\\ \sigma_{n-1}' \in G_{\target(b_{n-1})}^\vee}}
	\sigma_{n-1}'\left(g_{n-1} \kappa_{\target(b_{n-1})}^{-1}\right) \sigma_{n-1}\left(g_{n-1}^{-1} \kappa_{\source(a_{n-1})}^{-1} \kappa_{\source(b_{n-1})}\right)
	\tilde{a}_{{n-1}, \sigma_{n-1} \sigma_{n-1}'} \cdot \\
	& \cdots \\
	& \cdot \sum_{\substack{\sigma_1 \in G_{\source(b_1)}^\vee \\ \sigma_1' \in G_{\target(b_1)}^\vee}}
	\sigma_1'\left(g_1 \kappa_{\target(b_1)}^{-1}\right) \sigma_1\left(g_1^{-1} \kappa_{\source(a_1)}^{-1} \kappa_{\source(b_1)}\right)
	\tilde{a}_{1, \sigma_1 \sigma_1'}.
	\end{align*}
	Since $\tilde{a}_{i, \sigma_i \sigma_i'} \tilde{a}_{{i-1}, \sigma_{i-1} \sigma_{i-1}'} = 0$ if $\sigma_{i-1}' \neq \sigma_i$, for all $i = 2, \dots, n$, the above formula is reduced to
	$$
	\iota(b_n \cdots b_1) = 
	\sum_{\underline \sigma \in T} \xi_{\underline \sigma}\, 
	\tilde a_{n,\sigma_n \sigma_{n+1}}\tilde a_{n-1, \sigma_{n-1}\sigma_n} \cdots \tilde a_{1, \sigma_2\sigma_1}
	$$
	(note that we renamed $\sigma_n'$ to $\sigma_{n+1}$) where the coefficient $\xi_{\underline \sigma}$ is given by
	\begin{align*}
	\xi_{\underline \sigma} & =
	\sigma_{n+1}\left(g_n \kappa_{\target(b_n)}^{-1}\right) \sigma_n\left(g_n^{-1} \kappa_{\source(a_n)}^{-1} \kappa_{\source(b_n)}\right)
	\sigma_n\left(g_{n-1} \kappa_{\target(b_{n-1})}^{-1}\right) \sigma_{n-1}\left(g_{n-1}^{-1} \kappa_{\source(a_{n-1})}^{-1} \kappa_{\source(b_{n-1})}\right) \cdots \\
	& \cdots \sigma_2\left(g_1 \kappa_{\target(b_1)}^{-1}\right) \sigma_1\left(g_1^{-1} \kappa_{\source(a_1)}^{-1} \kappa_{\source(b_1)}\right) = \\
	& = \sigma_{n+1}\left(g_n\kappa_{\target(b_n)}^{-1}\right)\sigma_1\left(g_1^{-1}\kappa_{\source(a_1)}^{-1}\kappa_{\source(b_1)}\right)
	\prod_{i= 2}^{n} \sigma_i\left(g_i^{-1}g_{i-1}\kappa_{\source(a_i)}^{-1}\right)
	\end{align*}
	and so the first statement follows.
	
	For the second statement, note that, applying $\cyc$ to the formula we just proved for $\iota(b_n \cdots b_1)$, all the terms of the sum where $\sigma_{n+1} \neq \sigma_1$ become zero.
	Thus the claim follows from the fact that $\target(b_n) = \source(b_1)$.
\end{proof}

\section{Main result}\label{sec:mainresult}
In this section we extend the isomorphism of Section~\ref{sec:iso} from the case of a path algebra to the case of the Ginzburg dg algebra of a quiver with potential.
The setting is as follows: let $Q$ be a quiver with an action of a finite abelian group $G$ on the path algebra $k Q$. As before, we assume that the action permutes the vertices and maps arrows to multiples of arrows. 
Recall that, by Remark~\ref{rmk:wlog}, the weaker assumption that $G$ permutes the vertices and fixes the vector space of all arrows would be sufficient. Moreover, let $W$ be a potential on $Q$ such that $g(W)= W$ for every $g\in G$.
We will define a potential $W_G$ on $Q_G$ such that there is an isomorphism $\Phi$ between the Ginzburg dg algebra $\Gamma_{Q_G, W_G}$ and the idempotent subalgebra $\bar e \Gamma_{Q, W}G\bar e$ of the skew group dg algebra of the Ginzburg dg algebra of $(Q, W)$; this idempotent algebra will be Morita equivalent to~$\Gamma_{Q, W}G$. In degree 0, this isomorphism will be the given by $\phi: \widehat{k Q_G} \to \bar e(\widehat{kQ}) G\bar e$, as defined in Definition~\ref{def:phivertices} and Definition~\ref{def:phiarrows}

\begin{rmk}
	The existence of such a potential $W_G$ was proved in \cite{LM18} for $G$ an arbitrary finite group. Our aim in the abelian case is to compute it explicitly as a (possibly infinite) linear combination of cycles of $Q_G$.
\end{rmk}

In order to define the potential $W_G$, we will make use of the function $\iota:\widehat{k Q}\to (\widehat{k Q})G$.

\begin{lemma}
	There exists a unique continuous map $\bar \iota: P_Q\to P_{Q_G}$ such that the diagram
	$$
	\xymatrix{
	\widehat{kQ}_{cyc} \ar[r]^{\phi^{-1}\iota} \ar@{->>}[d] & \widehat{kQ_G} \ar@{->>}[d]\\
	P_Q \ar[r]^{\bar\iota} & P_{Q_G}
}$$
commutes.
\end{lemma}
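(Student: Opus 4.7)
The plan is to descend the continuous map $\phi^{-1}\iota$ through the surjection $q_Q\colon \widehat{kQ}_{cyc} \twoheadrightarrow P_Q$. I would first note that this restriction of the canonical projection is indeed surjective: for a path $p\colon i\to j$ with $i\neq j$ we have $p = jp - pj \in [\widehat{kQ},\widehat{kQ}]$, so every element of $\widehat{kQ}$ is congruent modulo $\overline{[\widehat{kQ},\widehat{kQ}]}$ to its cyclic part. Surjectivity of $q_Q$ yields the uniqueness of $\bar\iota$ at once. For existence, since $\phi^{-1}\iota$ is continuous and $\overline{[\widehat{kQ_G},\widehat{kQ_G}]}$ is closed, its preimage under $\phi^{-1}\iota$ is closed, so it is enough to prove the purely algebraic statement that $\phi^{-1}\iota$ sends $\widehat{kQ}_{cyc} \cap [\widehat{kQ},\widehat{kQ}]$ into $[\widehat{kQ_G},\widehat{kQ_G}]$.

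To pin down the left-hand side, I would analyse commutators of paths: for paths $p,q$ of $Q$, the element $[p,q]=pq-qp$ is either zero, a single non-cyclic path, or (precisely when both $pq$ and $qp$ are defined) a difference of two cyclic rotations of the same cyclic word. Hence $\widehat{kQ}_{cyc} \cap [\widehat{kQ},\widehat{kQ}]$ is spanned by differences $c - c'$ of cyclically equivalent cycles, and by iterating one-arrow rotations it is enough to treat $c = a_n\cdots a_1$ and $c' = a_{n-1}\cdots a_1\, a_n$. Repeatedly applying Lemma~\ref{lem:iota_prop}(1) gives $\iota(c) = \iota(a_n)\,\iota(a_{n-1}\cdots a_1)$ and $\iota(c') = \iota(a_{n-1}\cdots a_1)\,\iota(a_n)$, so $\iota(c)-\iota(c') = [\iota(a_n),\, \iota(a_{n-1}\cdots a_1)]$. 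Lemma~\ref{lem:iota_prop}(2) places both factors inside $\bar e(\widehat{kQ})G\bar e$, and applying the algebra isomorphism $\phi^{-1}$ then exhibits $\phi^{-1}\iota(c - c')$ as a commutator in $\widehat{kQ_G}$, as required.

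The only subtlety I expect is topological: one must ensure that these finite-commutator arguments really cover the closure $\widehat{kQ}_{cyc} \cap \overline{[\widehat{kQ},\widehat{kQ}]}$. This is handled automatically once the algebraic step above is in place, because the continuity of $\phi^{-1}\iota$ and the closedness of $\overline{[\widehat{kQ_G},\widehat{kQ_G}]}$ let us pass to closures on both sides. Continuity of $\bar\iota$ itself is then free from the universal property of the quotient topology on $P_Q$.
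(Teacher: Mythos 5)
Your argument is correct and follows essentially the same route as the paper: both reduce the problem to showing that $\phi^{-1}\iota$ turns commutators into commutators via Lemma~\ref{lem:iota_prop}, and then pass to closures using continuity. Your version is in fact slightly more careful, since you identify the kernel of $\widehat{kQ}_{cyc}\twoheadrightarrow P_{Q}$ concretely as (the closure of) the span of differences of cyclically equivalent cycles, realized as commutators $[p,q]$ of arbitrary paths rather than of cyclic elements, and you explicitly verify the surjectivity needed for uniqueness.
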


\begin{proof}
	What needs to be checked is that~$\phi^{-1}\iota\left(\, \overline{[\widehat{kQ}_{cyc}, \widehat{kQ}_{cyc}]} \,\right)$ is contained in~$\overline{[\widehat{kQ_G}, \widehat{kQ_G}]}$.  For this, it is sufficient to prove the inclusion before taking the closure of the subspaces.  Now, if~$p$ and~$q$ are paths in~$\widehat{kQ}_{cyc}$, then~
	\[
	 [p,q] = \begin{cases}
	          pq-qp & \textrm{if $\source(p) = \source(q)$}; \\
	          0 & \textrm{otherwise.}
	         \end{cases}
	\]
	Thus, by Lemma~\ref{lem:iota_prop}, we have that~
	\[
	 \iota([p,q]) = \begin{cases}
	          \iota(p)\iota(q)-\iota(q)\iota(p) = [\iota(p),\iota(q)] & \textrm{if $\source(p) = \source(q)$}; \\
	          0 & \textrm{otherwise.}
	         \end{cases}
	\]
   Therefore,~$\phi^{-1}\iota([p,q])$ lies in~$[\widehat{kQ_G}, \widehat{kQ_G}]$.  By linearity, this proves that~$\phi^{-1}\iota\big({[\widehat{kQ}_{cyc}, \widehat{kQ}_{cyc}]} \big)$ is contained in~${[\widehat{kQ_G}, \widehat{kQ_G}]}$.  Taking the closure of the subspaces, we get the desired result.
\end{proof}

Now we are ready to define the potential $W_G$. 
\begin{defin}\label{def:wg}
	Let $(Q, W)$ be a quiver with potential with an action of a finite abelian group $G$ which permutes the vertices and maps arrows to multiples of arrows. Assume moreover that $g(W)=W$ for all $g\in G$. Then define a potential $W_G$ on the quiver $Q_G$ by $$W_G= \bar\iota(W).$$
\end{defin}

\begin{rmk}\label{rem:lift}
	If $W'$ is a lift of $W$ to $\widehat{kQ}_{cyc}$, we remark that a possible lift of $W_G$ to $\widehat{k Q_G}$ (and in fact to $\widehat{kQ_G}_{cyc}$) is given by $\cyc\phi^{-1}\iota (W')$. 
\end{rmk}

\begin{lemma}\label{lem:s_commutes}
	We have $\sh\phi^{-1}\iota = \cyc\phi^{-1}\iota \sh$ as functions $\widehat{kQ}_{cyc}\to \widehat{kQ_G}_{cyc}$. 
\end{lemma}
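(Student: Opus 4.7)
The plan is to reduce the identity, by linearity and continuity, to a single cyclic path $p = a_n \cdots a_1$ in $Q$ (so $\source(a_1) = \target(a_n)$), and then to compare both sides after distributing $\phi^{-1}\iota$ over the product.

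Setting $X_k := \phi^{-1}\iota(a_k) \in \widehat{kQ_G}$, I would apply Lemma~\ref{lem:iota_prop}(1) iteratively along the cycle. Because $p$ is a cycle, every rotation $a_k \cdots a_1 a_n \cdots a_{k+1}$ is again a composable path in $Q$, so the multiplicativity of $\iota$ on composable paths (together with $\phi^{-1}$ being an algebra morphism) yields
$$\phi^{-1}\iota(a_k \cdots a_1 a_n \cdots a_{k+1}) = X_k \cdots X_1 X_n \cdots X_{k+1},$$
and summing over $k$ gives $\phi^{-1}\iota(\sh p) = \sum_k X_k \cdots X_1 X_n \cdots X_{k+1}$. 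Expanding each $X_k = \sum_\alpha c_{k,\alpha}\alpha$ in the basis of arrows of $Q_G$ and using that scalars commute, the same coefficient $c_{\vec\alpha} = \prod_k c_{k,\alpha_k}$ appears for every tuple $\vec\alpha = (\alpha_1,\dots,\alpha_n)$ regardless of which rotation we pick.

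Now, for fixed $\vec\alpha$, the inner sum $\sum_k \alpha_k \cdots \alpha_1 \alpha_n \cdots \alpha_{k+1}$ equals $\sh(\alpha_n \cdots \alpha_1)$ whenever $\alpha_n \cdots \alpha_1$ defines a valid cycle in $Q_G$; if not, all terms with $k < n$ vanish because the wrap-around composition $\alpha_1 \alpha_n$ fails, while the $k=n$ term is a non-cyclic path that is killed by $\cyc$. Dually, $\sh$ itself annihilates non-cyclic paths---this is the property that makes $\sh$ descend to the quotient $P_Q$, and one reads it off the defining formula by interpreting the empty tail in the $i=n$ summand as the stationary path at $\target(a_n)$. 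Combining these two facts, both $\sh\phi^{-1}\iota(p) = \sh(X_n \cdots X_1)$ and $\cyc\phi^{-1}\iota(\sh p)$ collapse to the same expression
$$\sum_{\vec\alpha \text{ cyclic}} c_{\vec\alpha}\, \sh(\alpha_n \cdots \alpha_1),$$
which visibly lies in $\widehat{kQ_G}_{cyc}$.

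I expect the main subtlety to be the observation that $\sh$ kills non-cyclic paths: this is what guarantees the left-hand side lies in the cyclic subspace at all, and once it is in place the remainder is essentially a bookkeeping argument in which cyclic rearrangement of a product (legal because scalars commute) is compatible with the summation defining $\sh$.
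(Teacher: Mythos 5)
Your proof is correct and follows essentially the same route as the paper's: both reduce to a single cyclic path, use Lemma~\ref{lem:iota_prop}(1) to write $\phi^{-1}\iota$ of each rotation as the correspondingly rotated product of the elements $\phi^{-1}\iota(a_k)$, and then match the two rotation sums term by term using that both $\sh$ and $\cyc$ annihilate non-cyclic paths. The only cosmetic difference is that you expand in the arrow basis of $Q_G$ where the paper inserts vertex idempotents between the factors.
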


\begin{proof}
	First observe that if $A_1, \ldots, A_n$ are sets of scalar multiples of arrows, we have
	\begin{align*}
	\left(\sum_{ a \in A_n} a\right)	\left(\sum_{ a \in A_{n-1}} a\right)\cdots	\left(\sum_{ a \in A_1} a\right) &=
	\sum_{\underline{i}\in (Q_0)^{n+1}} i_n\left(\sum_{ a \in A_n} a\right)i_{n-1}	\left(\sum_{ a \in A_{n-1}} a\right)i_{n-2}\cdots 	i_1\left(\sum_{ a \in A_1} a\right)i_0. 
	\end{align*}
	Now let $c =  a_n\cdots  a_1\in \widehat{kQ}_{cyc}$. 
	We have
	\begin{align*}
	\cyc\phi^{-1}\iota \sh(c) &=
	\cyc\phi^{-1}\iota \sum_{j=1}^n a_j\cdots  a_1 a_n\cdots  a_{j+1} \\
	&= \cyc\sum_{j=1}^n \phi^{-1}\iota( a_j)\cdots \phi^{-1}\iota( a_1)\phi^{-1}\iota( a_n)\cdots \phi^{-1}\iota( a_{j+1}) \\
	&= \sum_{j=1}^n \sum_{\underline{i}\in (Q_0)^{n+1}}i_{j}\phi^{-1}\iota( a_j)i_{j-1}\cdots i_1\phi^{-1}\iota( a_1)i_0\phi^{-1}\iota( a_n)i_{n-1}\cdots i_{j+1}\phi^{-1}\iota( a_{j+1})i_j \\
	&= \sum_{\underline{i}\in (Q_0)^{n+1}}\sh\left( i_n\phi^{-1}\iota( a_n)i_{n-1}\cdots i_1\phi^{-1}\iota( a_1)i_0 \right)\\
	&= \sh\left(\phi^{-1}\iota( a_n)\cdots \phi^{-1}\iota( a_1)
	\right) \\
	&= \sh\phi^{-1}\iota(c).\qedhere
	\end{align*}
\end{proof}

\begin{prop}\label{prop:eGammae}
	The algebra $\bar e \Gamma_{Q, W}G\bar e$ is a dg algebra Morita equivalent to~$\Gamma_{Q,W} G$.
\end{prop}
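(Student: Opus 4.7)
The plan is to reduce the proposition to the standard fact that a dg algebra is dg-Morita equivalent to the corner algebra of a full cycle idempotent. So the task is to check that $\bar e$ satisfies three properties in $\Gamma_{Q,W}G$: it is a cocycle, it is an idempotent, and it is full in the sense that $(\Gamma_{Q,W}G)\,\bar e\,(\Gamma_{Q,W}G) = \Gamma_{Q,W}G$.

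First I would verify the dg subalgebra structure. By construction $\bar e = \sum_{i_\circ \in \tilde I}\sum_{\rho \in G^\vee_{i_\circ}} e_{i_\circ\rho}$ lies in degree~$0$, and by character orthogonality $\sum_{\rho \in G^\vee_{i_\circ}} e_\rho = 1 \in kG_{i_\circ}$, so in fact $\bar e = \sum_{i_\circ \in \tilde I}(i_\circ \otimes 1)$. The differential on $\Gamma_{Q,W}G$ is $d(x\otimes g) = dx \otimes g$, and the differential of a vertex of $\overline Q$ vanishes for degree reasons. Hence $d(\bar e) = 0$ and $\bar e^2 = \bar e$, so $\bar e \Gamma_{Q,W}G\bar e$ is a dg subalgebra of $\Gamma_{Q,W}G$.

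Next I would show that $\bar e$ is full. For every $i \in Q_0$ there exists $i_\circ \in \tilde I$ with $\kappa_i(i) = i_\circ$, and a direct computation in the skew group algebra gives
\[
 (1 \otimes \kappa_i^{-1})(i_\circ \otimes 1)(1 \otimes \kappa_i) = \kappa_i^{-1}(i_\circ) \otimes 1 = i \otimes 1,
\]
so each $i \otimes 1$ lies in $(\Gamma_{Q,W}G)\,\bar e\,(\Gamma_{Q,W}G)$. Summing over $i \in Q_0$ yields $1 \otimes 1 \in (\Gamma_{Q,W}G)\,\bar e\,(\Gamma_{Q,W}G)$, proving fullness.

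Finally I would invoke the dg-Morita theory: for any dg algebra $A$ and any full cycle idempotent $e \in A$, the bimodule $Ae$ induces a triangle equivalence $\cD(eAe) \xrightarrow{\sim} \cD(A)$ via $- \otimes^L_{eAe} eA$, and restricts to an equivalence of perfect derived categories. Applying this to $A = \Gamma_{Q,W}G$ and $e = \bar e$ yields the claimed dg-Morita equivalence. There is no real obstacle: the argument mirrors the classical, non-dg statement for $\bar e (kQ)G \bar e$ recalled just before the proposition, with the sole additional observation being that $\bar e$ is a cocycle so that passing to the corner algebra is compatible with the dg structure.
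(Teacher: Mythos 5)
Your argument is correct, but it follows a genuinely different route from the paper. You prove that $\bar e$ is a degree-zero cocycle idempotent which is \emph{full}, exhibiting $1\otimes 1 = \sum_{i\in Q_0}(1\otimes\kappa_i^{-1})(\kappa_i(i)\otimes 1)(1\otimes\kappa_i)$ inside $(\Gamma_{Q,W}G)\,\bar e\,(\Gamma_{Q,W}G)$, and then invoke dg-Morita theory for full idempotents. The paper instead decomposes $\Gamma_{Q,W}G$ (as a non-dg module) into indecomposable projectives, passes to the radical quotient of the endomorphism algebra, identifies it with $J(Q,W)G/\operatorname{rad}$ via $H^0$, and cites \cite{RR85,Dem10} to see that $\bar e$ hits each matrix block; this has the side benefit of showing that $\bar e\Gamma_{Q,W}G\bar e$ is in fact \emph{basic}, which your argument does not address (though the proposition as stated does not require it). Your approach is more self-contained and avoids the classification of indecomposable summands, at the cost of black-boxing the dg-Morita statement; when unwinding that black box one needs the fullness relation $1=\sum a_i\bar e b_i$ to be witnessed by \emph{cocycles} so that the resulting split surjection $\bigoplus_i \Gamma_{Q,W}G\,\bar e\to\Gamma_{Q,W}G$ is a morphism of dg modules and $\bar e\,\Gamma_{Q,W}G$ generates the derived category. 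You should note explicitly that this is automatic here: your witnesses $1\otimes\kappa_i^{\pm 1}$ and $\kappa_i(i)\otimes 1$ live in degree $0$, and since $\Gamma_{Q,W}G$ is concentrated in non-positive degrees, every degree-zero element is a cocycle. With that remark added, both proofs are complete and yield the equivalence needed for the subsequent corollary on cluster categories.
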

\begin{proof}
 That~$\bar e \Gamma_{Q, W}G\bar e$ is a dg algebra follows from the fact that for any homogeneous element~$x$ of~$\Gamma_{Q,W}$, we have
 \[
  d(\bar e x \bar e) = d(\bar e)x\bar e + \bar e d(x) \bar e \pm \bar e x d(\bar e) = \bar e d(x) \bar e,
 \]
 since~$d(\bar e) = 0$.
 
 The proof that~$\bar e \Gamma_{Q, W}G\bar e$ and~$\Gamma_{Q,W} G$ are Morita equivalent is very similar to the proof in~\cite{RR85,Dem10} in the non dg case.  We outline the proof for the dg case here.  To lighten the notation, we write~$\Gamma$ instead of~$\Gamma_{Q,W}$.
 
 If we forget the dg structure of~$\Gamma G$, then write~
 \[
  \Gamma G = \bigoplus_{i=1}^r P_i^{\oplus a_i},
 \]
 where the~$P_i$ are pairwise non-isomorphic indecomposable projective (non dg)~$\Gamma G$-modules.  Then the quotient of the endomorphism algebra of~$\Gamma G$ (as a non dg module) by its Jacobson radical is isomorphic to
 \[
  \prod_{i=1}^r \operatorname{Mat}_{a_i \times a_i}(k).
 \]
 One then shows that the image of~$\bar e$ in this quotient is a sum of~$r$ idempotents, one in each copy of~$\operatorname{Mat}_{a_i \times a_i}(k)$.  This shows that this quotient only depends on the action of~$G$ on the vertices of~$Q$, and that~$\bar e \Gamma G \bar e$ is Morita equivalent to~$\Gamma G$.
 
 To prove that this is still true for the dg algebras, it suffices to show that the quotient of the endomorphism algebra of~$\Gamma G$ (as a dg module, this time) by its Jacobson radical is also isomorphic to 
 \[
  \prod_{i=1}^r \operatorname{Mat}_{a_i \times a_i}(k).
 \]
 But we have that
 \begin{eqnarray*}
  \operatorname{End}_{\Gamma G}(\Gamma G) \big/ \operatorname{rad}\big(\operatorname{End}_{\Gamma G}(\Gamma G)\big)  & \cong & H^0(\Gamma G) \big/ \operatorname{rad}(\Gamma G) \\
  & \cong & J(Q,W)G \big/ \operatorname{rad}(J(Q,W)G).
 \end{eqnarray*}
 This last algebra is an associative algebra in degree zero, so~\cite{RR85,Dem10} apply.  Thus it only depends on the action of~$G$ on the vertices of~$Q$, and it is isomorphic to~$\prod_{i=1}^r \operatorname{Mat}_{a_i \times a_i}(k)$.  It is easy to see that the image of~$\bar e$ in this product ring is the same as above.  This finishes the proof. 
\end{proof}

We extend $\iota$ to $\overline Q$ with the same definition.

	\begin{thm}\label{thm:main}
		There is a continuous isomorphism of dgas 
		$\Phi: \Gamma_{Q_G, W_G} \to \bar e \Gamma_{Q, W}G\bar e$ defined by:
		\begin{itemize}
			\item In degree 0, $\Phi = \phi$. 
			\item In degree -1, $\Phi(\tilde  a_{\rho \sigma}^*)
			= \frac{|G|}{|G_{i_\circ j_\circ}|}(1\otimes e_\rho)\iota( a^*)(1\otimes e_\sigma)$, for $ a\in D(i_\circ, j_\circ)$.
			\item In degree -2, $\Phi(t_{(i_\circ, \rho)})
			= \frac{|G|}{|G_{i_\circ}|}(1\otimes e_\rho)\iota(t_{i_\circ})(1\otimes e_\rho)$, 
		\end{itemize}
		extended $k$-linearly, continuously and multiplicatively. 
	\end{thm}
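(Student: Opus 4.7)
The plan is to show $\Phi$ is a well-defined continuous graded algebra map, that it is bijective, and that it commutes with the differential. Since $\Gamma_{Q_G,W_G}$ is the completed path algebra of the graded quiver $\overline{Q_G}$ (equipped with a differential but no further relations), the assignment on the generators of each graded component extends uniquely to a continuous graded algebra homomorphism whose image lies in $\bar e \Gamma_{Q,W} G \bar e$ by the idempotent-sandwich form of each formula together with the extension of Lemma \ref{lem:iota_prop}(2) to $\overline{Q}$.

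For bijectivity, I would proceed degree by degree. Degree $0$ is exactly the isomorphism $\phi$ of Definition \ref{def:phiarrows}. For degree $-1$, I would prove a direct analog of Lemma \ref{lem:arrow_basis}, with $M$ replaced by the degree $-1$ arrow-span $M^*$: under the simplifying assumption, $g(a) = \lambda b$ forces $g(a^*) = \lambda b^*$, so the same character-theoretic argument shows that $\{(1\otimes e_\rho)\iota(a^*)(1\otimes e_\sigma) : a \in D(i_\circ,j_\circ),\ \chi_a = \rho|_{G_{i_\circ j_\circ}}\sigma|_{G_{i_\circ j_\circ}}^{-1}\}$ is a basis of the corresponding $e_{i_\circ \rho}(\bar e M^*G\bar e)e_{j_\circ \sigma}$ component; the prefactor $\frac{|G|}{|G_{i_\circ j_\circ}|}$ does not disturb the basis property. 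For degree $-2$, since $g(t_{i_\circ}) = t_{i_\circ}$ for $g\in G_{i_\circ}$, a similar counting produces exactly one basis element per character $\rho\in G^\vee_{i_\circ}$. Bijectivity on the remaining graded components then follows formally because both sides are generated as graded algebras in degrees $0, -1, -2$.

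For differential compatibility, by the Leibniz rule it suffices to check $d\Phi = \Phi d$ on generators. Degree $0$ generators have $d = 0$ on both sides. For the loop $t_{(i_\circ,\rho)}$, expanding $d(t_{i_\circ}) = i_\circ\bigl(\sum_{a\in Q_1} [a,a^*]\bigr)i_\circ$ and computing $(1\otimes e_\rho)\iota\bigl([a,a^*]\bigr)(1\otimes e_\rho)$ through Lemmas \ref{lem:arrow_basis} and \ref{lem:trifoglio} reproduces $\Phi$ applied to the corresponding commutator sum in $\Gamma_{Q_G,W_G}$, the factor $\frac{|G|}{|G_{i_\circ}|}$ arising from the averaging inside $e_\rho$. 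For the degree $-1$ generator $\tilde a^*_{\rho\sigma}$ with distinguished $a\in D(i_\circ,j_\circ)$, one side reads
\[
d\Phi(\tilde a^*_{\rho\sigma}) = \tfrac{|G|}{|G_{i_\circ j_\circ}|}(1\otimes e_\rho)\iota(\partial_a W)(1\otimes e_\sigma)
\]
(using $d(a^*\otimes 1) = \partial_a W \otimes 1$), while the other side is $\Phi(d\tilde a^*_{\rho\sigma}) = \phi\bigl(\delta_{\tilde a_{\rho\sigma}}(\sh W_G)\bigr)$. Fixing a cyclic lift $W'$ of $W$ and invoking Lemma \ref{lem:s_commutes} together with Remark \ref{rem:lift}, $\sh W_G$ is represented by $\cyc\phi^{-1}\iota(\sh W')$; expanding each arrow of $\sh W'$ via Lemma \ref{lem:iota(arrow)}, extracting the terms whose first arrow is $\tilde a_{\rho\sigma}$, and applying $\phi$ produces a character-weighted sum over $G$ that, after orthogonality of characters on $G_{i_\circ j_\circ}$, collapses to the previous expression.

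The main obstacle is the $\tilde a^*$ case: reconciling the two character sums requires delicate bookkeeping of the representatives $\kappa_i$ and the distinguished-arrow choices, and the prefactor $|G|/|G_{i_\circ j_\circ}|$ must emerge exactly from character orthogonality on $G_{i_\circ j_\circ}$. This quantitative matching is where the definition of $\Phi$ is forced and where the theorem has its content; once it is verified, combining the resulting isomorphism with Proposition \ref{prop:eGammae} realizes $\Gamma_{Q_G,W_G}$ as a basic dg algebra Morita equivalent to $\Gamma_{Q,W} G$, as announced.
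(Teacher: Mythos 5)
Your outline follows the paper's own proof almost step for step (well-definedness and bijectivity on generators, then $d\Phi=\Phi d$ on generators using Lemma~\ref{lem:s_commutes}, Remark~\ref{rem:lift} and Lemma~\ref{lem:iota(arrow)}), so the architecture is right; but the step you yourself flag as carrying all the content is precisely the one you have not carried out, and the mechanism you propose for it is not the one that works. The prefactor $|G|/|G_{i_\circ j_\circ}|$ in the identity $d\Phi(\tilde a^*_{\rho\sigma})=\Phi d(\tilde a^*_{\rho\sigma})$ does \emph{not} come from orthogonality of characters on $G_{i_\circ j_\circ}$. What actually happens is this: the terms of $\sh W$ beginning with an arrow $b$ in the $G$-orbit of $a$ are grouped via a set $L_a$ of representatives of $G/G_{i_\circ j_\circ}$ with $g^{-1}(a)$ an arrow; for each such $g$ the character values $\sigma\left(g\kappa_{\target(b)}^{-1}\right)\rho\left(g^{-1}\kappa_{\source(a)}^{-1}\kappa_{\source(b)}\right)$ produced by Lemma~\ref{lem:iota(arrow)} cancel \emph{pointwise} against the values created when $g$ is commuted past $e_\rho$ and $e_\sigma$ (via $e_\tau h=\tau(h^{-1})e_\tau$); and then the $G$-invariance of $W$, in the form $\lambda_c m_{g,c}=\lambda_d$ for $g(c)=m_{g,c}d$, makes all $|L_a|=|G|/|G_{i_\circ j_\circ}|$ terms equal to the same summand $\lambda_d(1\otimes e_\rho)\iota(q')(1\otimes e_\sigma)$. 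Your sketch never invokes $g(W)=W$ at this point, and without it the orbit sum does not collapse; that is the missing idea.

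A smaller imprecision: for bijectivity in degree $-1$, a verbatim analogue of Lemma~\ref{lem:arrow_basis} for the starred arrows would produce a basis indexed by $b^*$ with $b:i_\circ\to j$, $j\in R_{j_\circ i_\circ}$, not by $a^*$ with $a\in D(i_\circ,j_\circ)$ as required by the definition of $\Phi$; the basis you actually need is the ``dual'' one of Lemma~\ref{lem:trifoglio} applied to $\overline Q\setminus Q_1$, which is exactly why that lemma appears in the paper. Likewise, in degree $-2$ the matching of $(1\otimes e_\rho)\iota([a,a^*])(1\otimes e_\rho)$ with $\Phi$ of the commutator sum requires splitting $\sum_{a}i_\circ[a,a^*]i_\circ$ according to whether the other endpoint of $a$ lies in $G(i_\circ)$ and summing over representatives of $G_{i_\circ}/G_{i_\circ j_\circ}$, which converts the prefactor $|G|/|G_{i_\circ}|$ into the $|G|/|G_{i_\circ j_\circ}|$ appearing in degree $-1$; ``averaging inside $e_\rho$'' alone does not produce this. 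None of this derails the strategy, but as written the proposal asserts rather than proves the two identities on which the theorem rests.
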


\begin{proof}
	Firstly, $\Phi$ is a map of algebras, since it is by definition $k$-linear, multiplicative, and $\Phi(1) = \phi(1) = 1$. It is also continuous by definition.
	Let us check that $\Phi$ is a vector space isomorphism. 
	In degree 0, this is true since $\Phi = \phi$ and $\phi$ is an isomorphism.
	In degree -1, we need to check that 
	$\left\{
	(1\otimes e_\rho)\iota( a^*)(1\otimes e_\sigma) \, |\,  a\in D
	\right\}
	$
	is a basis of $\left (  \bar e \Gamma_{Q, W}G\bar e  \right )_{-1}$. This follows from Lemma~\ref{lem:trifoglio} applied to the quiver $\overline Q\setminus Q_1$.
	
	In degree -2, it is enough to prove that the vector subspace
	$$V=\left\langle e_{j_\circ\sigma}(t_l\otimes e_\chi)e_{i_\circ\rho} \,|\, l\in Q_0,\, \chi\in G^\vee,\, i_\circ,j_\circ\in\tilde I,\, \rho\in G^\vee_{i_\circ},\, \sigma\in G^\vee_{j_\circ} \right\rangle$$
	of $\bar e \Gamma_{Q, W}G\bar e$ is equal to $\langle \Phi(t_{(i_\circ,\rho)}) \,|\, i_\circ\in\tilde I,\, \rho\in  G^\vee_{i_\circ}\rangle$.
	In the same way as in the proof of Lemma~\ref{lem:arrow_basis}, we can show that $e_{j_\circ\sigma}(t_l\otimes e_\chi)e_{i_\circ\rho}=0$ if $l\neq j_\circ$ or $l\not\in G(i_\circ)$.
	This means that we can consider only such elements for $l=i_\circ=j_\circ$.
	Moreover, again as in Lemma~\ref{lem:arrow_basis}, we can prove that $V$ is generated by the set
	$$\left\{(1\otimes e_\sigma)\iota(t_{i_\circ})(1\otimes e_\rho) \,|\, i_\circ\in\tilde I,\, \rho,\sigma\in G^\vee_{i_\circ} \right\}.$$
	Then the claim follows, since $(1\otimes e_\sigma)\iota(t_{i_\circ})(1\otimes e_\rho) = \iota(t_{i_\circ})(1\otimes e_\sigma e_\rho) = 0$ if $\rho\neq\sigma$.
	
	Let us now check that $\Phi$ commutes with differentials. It is enough to check that $d\Phi(\tilde  a_{\rho \sigma}^*)  = \Phi d(\tilde  a_{\rho \sigma}^*)$ for $ a \in D$, and that $d\Phi(t_{(i_\circ, \rho)})  = \Phi d(t_{(i_\circ, \rho)})$. 
	Let us check the first equality. We have
	\begin{align*}
	d\Phi(\tilde  a_{\rho \sigma}^*) &=
	d\left(\frac{|G|}{|G_{i_\circ j_\circ}|}(1\otimes e_\rho)\iota( a^*)(1\otimes e_\sigma) \right)  \\
	&= \frac{|G|}{|G_{i_\circ j_\circ}|}(1\otimes e_\rho\kappa_{\source(a)})(d( a^*)\otimes 1)(1\otimes e_\sigma) \\
	&= \frac{|G|}{|G_{i_\circ j_\circ}|}(1\otimes e_\rho)\iota(\partial_ a(W))(1\otimes e_\sigma),
	\end{align*}
	using the Leibniz rule. Let us fix a lift of $W$ to $\widehat{kQ}_{cyc}$, which we still call $W$ for simplicity. Let us write $\sh W  = \sum_{c\in \mathcal C} \lambda_c c$, where the $c$ are cycles in $\widehat{kQ}_{cyc}$. 
	Then 
	\begin{align*}
	d\Phi(\tilde  a_{\rho \sigma}^*) &=
	\frac{|G|}{|G_{i_\circ j_\circ}|}(1\otimes e_\rho)\iota(\delta_ a(\sh W))(1\otimes e_\sigma)\\
	&=\frac{|G|}{|G_{i_\circ j_\circ}|}\sum_{c\in \mathcal C, \, c=  a q}(1\otimes e_\rho)\iota(q)(1\otimes e_\sigma).
	\end{align*}
	On the other hand, we have
	\begin{align*}
	\Phi d (\tilde  a_{\rho \sigma}^*) &=
	\Phi\partial_{\tilde  a_{\rho \sigma}}(W_G)  \\
	&= \Phi \delta_{\tilde  a_{\rho \sigma}}\sh \phi^{-1} \iota(W)  \\
	&= \Phi \delta_{\tilde  a_{\rho \sigma}}\cyc \phi^{-1}\iota (\sh W) \\	
	&= \sum_{c\in \mathcal C} \lambda_c \Phi \delta_{\tilde  a_{\rho \sigma}}\cyc \phi^{-1}\iota (c)  \\
	&= \sum_{c\in \mathcal C} \lambda_c \Phi \delta_{\tilde  a_{\rho \sigma}}\cyc (\phi^{-1}\iota(b)\phi^{-1}\iota(p))\\
	&=\sum_{c\in \mathcal C, \, c=  b p} \lambda_c \Phi \delta_{\tilde  a_{\rho \sigma}}\cyc\left ( \left (\sum_{\tau, \omega} \mu_{ b \tau\omega}\tilde{ b'}_{\tau\omega}\right )\phi^{-1}\iota (p) \right) \\
	&=\sum_{c\in \mathcal C, \, c=  b p} \lambda_c \sum_{\tau, \omega} \mu_{ b \tau\omega}\Phi \delta_{\tilde  a_{\rho \sigma}}\tilde{ b'}_{\tau\omega}\phi^{-1}\iota (p) \target(\tilde{ b'}_{\tau\omega}),
	\end{align*}
where $b$ is an arrow, $b'$ is the only distinguished arrow in the orbit of $b$, $\tau$ and $\omega$ run through the characters of the stabilizers of its source and target, and the $\mu_{b\tau \omega}$ are some coefficients.
Note that in the third equality we have applied Lemma~\ref{lem:s_commutes}.

 We can remark that the term in the sum is 0 unless $\tilde{ b'}_{\tau\omega} = \tilde  a_{\rho \sigma}$ by the definition of cyclic derivative, so in fact the above expression equals
	\begin{align*}
	\Phi d (\tilde  a_{\rho \sigma}^*) &=
	\sum_{c\in \mathcal C, \, c=  b p, \,  b \in G( a)} \lambda_c \mu_{ b \rho\sigma}\Phi (i_\circ, \rho)\phi^{-1}\iota(p)(j_\circ, \sigma)\\
	&=\sum_{c\in \mathcal C, \, c=  b p, \,  b \in G( a)} \lambda_c \mu_{ b \rho\sigma}\Phi \phi^{-1}(e_{i_\circ \rho}\iota (p)e_{j_\circ \sigma}) \\
	&=\sum_{c\in \mathcal C, \, c=  b p, \,  b \in G( a)} \lambda_c \mu_{ b \rho\sigma}(1\otimes e_\rho)\iota(p)(1\otimes e_\sigma).
	\end{align*}
	 
	It remains to determine the coefficients $\mu_{ b \rho\sigma}$ precisely.
	To do this, we can use the formula of Lemma~\ref{lem:iota(arrow)}. From the above computations we have that
	\[
	\mu_{b\rho\sigma}\tilde a_{\rho\sigma}= (j_\circ, \sigma)\phi^{-1}\iota(b)(i_\circ, \rho)
	\]
	for all $b\in G(a)$. On the other hand, if we set $b= g^{-1}(a)$, we have by Lemma~\ref{lem:iota(arrow)}:
	\[
	(j_\circ, \sigma)\phi^{-1}\iota(b)(i_\circ, \rho) =  \sigma\left(g\kappa_{\target(b)}^{-1}\right)\rho\left(g^{-1}\kappa_{\source(a)}^{-1}\kappa_{\source(b)}\right)\tilde a_{\rho\sigma},
	\]
	so we obtain
	\[
	\mu_{g^{-1}(a)\rho\sigma}  = \sigma\left(g\kappa_{\target(g^{-1}(a))}^{-1}\right)\rho\left(g^{-1}\kappa_{\source(a)}^{-1}\kappa_{\source(g^{-1}(a))}\right).
	\]

	Let now $L_ a\subseteq G$ be a set of representatives of $G/G_{i_\circ j_\circ}$ such that $g^{-1}( a)$ is an arrow for every $g\in L_ a$ (this exists by Lemma~\ref{lem:perm}). Then we have (recall that in the formula we got $b$ is assumed to be an arrow)
	\begin{align*}
	\Phi d (\tilde  a_{\rho \sigma}^*) &=
	\sum_{c\in \mathcal C, \, c=  b p, \,  b \in G( a)} \lambda_c \mu_{ b \rho\sigma}(1\otimes e_\rho)\iota(p)(1\otimes e_\sigma)\\
	&=\sum_{g\in L_ a}\sum_{c\in \mathcal C, \, c=  b p, \,  g(b)=a}\lambda_c \mu_{ b \rho\sigma}(1\otimes e_\rho)\iota(p)(1\otimes e_\sigma)\\
	&=\sum_{g\in L_ a}\sum_{c\in \mathcal C, \, g(c)= aq}\lambda_c \mu_{g^{-1}(a)\rho\sigma} (1\otimes e_\rho)\iota(g^{-1}(q))(1\otimes e_\sigma)\\
	&=\sum_{g\in L_ a}\sum_{c\in \mathcal C, \, g(c)= aq}\lambda_c \mu_{g^{-1}(a)\rho\sigma} \left(1\otimes e_\rho\kappa_{\source(g^{-1}(a))}\right)(g^{-1}(q)\otimes 1)\left(1\otimes e_\sigma \kappa_{\target(g^{-1}(a))}^{-1}\right)\\
	&=\sum_{g\in L_ a}\sum_{c\in \mathcal C, \, g(c)= aq}\lambda_c \mu_{g^{-1}(a)\rho\sigma} \left(1\otimes e_\rho\kappa_{\source(g^{-1}(a))}g^{-1}\right)(q\otimes 1)\left(1\otimes ge_\sigma \kappa_{\target(g^{-1}(a))}^{-1}\right)\\
	&=\sum_{g\in L_ a}\sum_{c\in \mathcal C, \, g(c)= aq}\lambda_c \mu_{g^{-1}(a)\rho\sigma} \left(1\otimes e_\rho\kappa_{\source(g^{-1}(a))}g^{-1}\kappa_{\source(a)}^{-1}\right)(1\otimes \kappa_{\source(a)})(q\otimes 1)\left(1\otimes ge_\sigma \kappa_{\target(g^{-1}(a))}^{-1}\right)\\
	&=\sum_{g\in L_ a}\sum_{c\in \mathcal C, \, g(c)= aq}\lambda_c (1\otimes e_\rho)(1\otimes \kappa_{\source(a)})(q\otimes 1)\left(1\otimes e_\sigma \right)\\
	&=\sum_{g\in L_ a}\sum_{c\in \mathcal C, \, g(c)= aq}\lambda_c (1\otimes e_\rho)\iota(q)(1\otimes e_\sigma),
	\end{align*}
	where, in the sixth equality, we have used that for a character $\tau$ of a group $H$, we have $e_\tau h= \tau(h^{-1})e_\tau$ for all $h\in H$. 
	
	To continue, we remark that $q= g(p)$ need not be a path. However, $g(c)= m_{g, c} d$, where $m_{g, c}\in k$ and $d\in \mathcal C$. From the assumption that $g(W) = W$ we obtain
	\[\lambda_c m_{g,c}d = g(\lambda_c c) = \lambda_d d,
	\]
     so that 
     \begin{align*}
     \Phi d (\tilde  a_{\rho \sigma}^*) &=
     \sum_{g\in L_ a}\sum_{c\in \mathcal C, \, g(c)= aq}\lambda_c m_{g,c} (1\otimes e_\rho)\iota\left(\frac{q}{m_{g,c}}\right)(1\otimes e_\sigma)\\
     &= \sum_{g\in L_ a}\sum_{c\in \mathcal C, \, g(c)=m_{g, c} d, \, d= aq' }\lambda_d (1\otimes e_\rho)\iota\left(q'\right)(1\otimes e_\sigma)\\
     &= \frac{|G|}{|G_{i_\circ j_\circ}|} \sum_{d\in \mathcal C, \, d= a q'} 
	\lambda_d	(1\otimes e_\rho)\iota(q')(1\otimes e_\sigma) \\
	&= d\Phi (\tilde  a_{\rho \sigma}^{*}).
     \end{align*}
	This concludes the proof that $\Phi$ commutes with $d$ in degree -1.
	
	For degree -2, we need to show that 
	$d\Phi (t_{(i_\circ, \rho)}) = \Phi d(t_{(i_\circ, \rho)})$. 
	Using the Leibniz rule, we have 
	\begin{align*}
	d\Phi (t_{(i_\circ, \rho)}) &=
	\frac{|G|}{|G_{i_\circ}|} d\left ( (1\otimes e_\rho)\iota(t_{i_\circ})(1\otimes e_\rho)  \right) \\
	&= 	\frac{|G|}{|G_{i_\circ}|} (1\otimes e_\rho)(d(t_{i_\circ})\otimes 1)(1\otimes e_\rho)  \\
	&= 	\frac{|G|}{|G_{i_\circ}|} (1\otimes e_\rho)
	\left(\sum_{ a\in Q_1}i_\circ [ a,  a^*]i_\circ\otimes 1\right)(1\otimes e_\rho)  \\
	&= (1) + (2)+ (3)+ (4 )+ (5), 
	\end{align*}
   where 
   \begin{align*}
   (1) &= \frac{|G|}{|G_{i_\circ}|} \sum_{ a: j\to i_\circ, \, j\not \in G(i_\circ)} (1\otimes e_\rho)( a a^*\otimes 1)(1\otimes e_\rho), \\
   (2) &= - \frac{|G|}{|G_{i_\circ}|} \sum_{ a: i_\circ\to j, \, j\not \in G(i_\circ)} (1\otimes e_\rho)( a^* a\otimes 1)(1\otimes e_\rho), \\
   (3) &= \frac{|G|}{|G_{i_\circ}|} \sum_{ a: i\to i_\circ, \, i\in G(i_\circ)\setminus \{i_\circ\}} (1\otimes e_\rho)( a a^*\otimes 1)(1\otimes e_\rho), \\
   (4) &= - \frac{|G|}{|G_{i_\circ}|} \sum_{ a: i_\circ\to i, \, i \in G(i_\circ)\setminus \{i_\circ\}} (1\otimes e_\rho)( a^* a\otimes 1)(1\otimes e_\rho), \\
   (5) &= \frac{|G|}{|G_{i_\circ}|} \sum_{ a: i_\circ\to i_\circ} (1\otimes e_\rho)([ a,  a^*]\otimes 1)(1\otimes e_\rho).
   \end{align*}
	
	For an arrow $ a:j\to i_\circ$, let $L_{ a, i_\circ}$ be a set of representatives of $G_{i_\circ}/ G_{i_\circ j_\circ}$ mapping $ a$ to arrows (this exists by Lemma~\ref{lem:perm}). Then 
	\begin{align*}
	(1) &= 
	\sum_{ a\in D(j_\circ, i_\circ), \, i_\circ \neq j_\circ} \sum_{g\in L_{ a, i_\circ}} \frac{|G|}{|G_{i_\circ}|}(1\otimes e_\rho )(g( a)g( a)^*\otimes 1)(1\otimes e_\rho)  \\
		&=\sum_{a\in D(j_\circ, i_\circ), \, i_\circ \neq j_\circ} \sum_{g\in L_{ a, i_\circ}} \frac{|G|}{|G_{i_\circ}|}(1\otimes e_\rho )(1\otimes g)( a a^*\otimes 1)(1\otimes g^{-1})(1\otimes e_\rho)  \\
	&= 	\sum_{a\in D(j_\circ, i_\circ), \, i_\circ \neq j_\circ} \sum_{g\in L_{ a, i_\circ}} \frac{|G|}{|G_{i_\circ}|}(1\otimes e_\rho)( a a^*\otimes 1)(1\otimes e_\rho) \\
	&= \frac{|G|}{|G_{i_\circ j_\circ}|}\sum_{a\in D(j_\circ, i_\circ), \, i_\circ \neq j_\circ}(1\otimes e_\rho)\iota( a a^*)(1\otimes e_\rho).
	\end{align*}
	Similarly we have 
	\begin{align*}
	(3) &= \frac{|G|}{|G_{i_\circ}|}\sum_{a\in D(i_\circ, i_\circ), \, \source(a)\neq i_\circ}(1\otimes e_\rho)\iota( a a^*)(1\otimes e_\rho).
	\end{align*}
	Let now $ a:i\to j$ be an arrow, with $i\in G(i_\circ)$. Then choose $N_{ a, i_\circ}\subseteq G$ to be a maximal set mapping $ a$ to distinct arrows and $i$ to $i_\circ$. Thus for $g\in N_{ a, i_\circ}$, $g = g\kappa_i^{-1}\kappa_i$, which means that $N_{ a, i_\circ}$ is a shift by $\kappa_i$ of a set of representatives of $G_{i_\circ}/G_{i_\circ j_\circ}$. 
	We have
	\begin{align*}
	(2) &= 
		-\frac{|G|}{|G_{i_\circ}|}\sum_{ a: i\to j, \, j\not \in G(i_\circ), \, i\in G(i_\circ)}  (1\otimes e_\rho )(i_\circ  a^* a i_\circ\otimes 1)(1\otimes e_\rho)  \\
		&= -\frac{|G|}{|G_{i_\circ}|}\sum_{a\in D(i_\circ, j_\circ), \, i_\circ \neq j_\circ} \sum_{g\in N_{ a, i_\circ}}  (1\otimes e_\rho )(i_\circ g( a)^*g( a) i_\circ\otimes 1)(1\otimes e_\rho)  \\
		&= -\frac{|G|}{|G_{i_\circ}|}\sum_{a\in D(i_\circ, j_\circ), \, i_\circ \neq j_\circ} \sum_{g\in N_{ a, i_\circ}}  \left(1\otimes e_\rho g\kappa_{\source(a)}^{-1} \right)(1\otimes \kappa_{\source(a)})(  a^* a\otimes 1)\left(1\otimes \kappa_{\source(a)}^{-1}\right)\left(1\otimes e_\rho g^{-1}\kappa_{\source(a)}\right)  \\
		&= -\frac{|G|}{|G_{i_\circ j_\circ}|}\sum_{a\in D(i_\circ, j_\circ), \, i_\circ \neq j_\circ} (1\otimes e_\rho )\iota( a^* a)(1\otimes e_\rho ). 
	\end{align*}
	In the same way, we get
	\begin{align*}
	(4) &= 
	-\frac{|G|}{|G_{i_\circ}|}\sum_{a\in D(i_\circ, i_\circ), \, \source(a)\neq i_\circ} (1\otimes e_\rho )\iota( a^* a)(1\otimes e_\rho ). 
	\end{align*}
	Finally, it follows immediately from the definition of $\iota$ that
		\begin{align*}
	(5) &= 
	\frac{|G|}{|G_{i_\circ}|}\sum_{ a: i_\circ\to i_\circ} (1\otimes e_\rho )\iota([ a, a^*])(1\otimes e_\rho ). 
	\end{align*}
	In fact, we see that
		\begin{align*}
	(3)+ (4)+ (5) &= 
	\frac{|G|}{|G_{i_\circ}|}\sum_{a\in D(i_\circ, i_\circ)} (1\otimes e_\rho )\iota(\left[ a,  a^*\right])(1\otimes e_\rho ). 
	\end{align*}
	
	On the other hand, we can compute
		\begin{align*}
		\Phi d(t_{(i_\circ, \rho)})&= 
		\Phi \sum_{\tilde  a_{\tau \omega}\in (Q_G)_1} (i_\circ, \rho)\left[\tilde a_{\tau \omega}, \tilde a_{\tau \omega}^*\right] (i_\circ, \rho) \\
		&= (A)+ (B )+ (C),
	\end{align*}
	where
	\begin{align*}
	(A) &= -\Phi \sum_{ a\in D(i_\circ, j_\circ), \, i_\circ \neq j_\circ}\sum_{\sigma\in G_{j_\circ}^\vee} \tilde a_{\rho \sigma}^*\tilde  a_{\rho \sigma},\\
	(B) &= \Phi \sum_{ a\in D(j_\circ, i_\circ), \, i_\circ \neq j_\circ}\sum_{\sigma\in G_{j_\circ}^\vee} \tilde a_{ \sigma\rho}\tilde  a_{\sigma\rho}^*, \\
	(C) &= \Phi \sum_{ a\in D(i_\circ, i_\circ)}\sum_{\sigma\in G_{i_\circ}^\vee} \left[\tilde a_{\rho\sigma},\tilde  a_{\rho\sigma}^*\right]. 
	\end{align*}
	Now 
	\begin{align*}
	(A)&=-\frac{|G|}{|G_{i_\circ j_\circ}|}\sum_{a\in D(i_\circ, j_\circ), \, i_\circ \neq j_\circ}\sum_{\sigma\in G_{j_\circ}^\vee}
	(1\otimes e_\rho)\iota( a^*)(1\otimes e_\sigma)\iota( a)(1\otimes e_\rho)\\
	&= (2).
	\end{align*}
	In the same way, $(B) = (1)$ and $(C) = (3)+(4)+(5)$, which concludes the proof.
\end{proof}

A consequence of Theorem~\ref{thm:main} is that we obtain functors between the generalized cluster categories of~$(Q,W)$ and of~$(Q_G, W_G)$.  For a dg algebra~$A$, let~$\cD A$ be its derived category,~$\perf A$ be its perfect derived category (that is, the smallest triangulated subcategory of~$\cD A$ containing~$A$ and stable under taking direct summands) and~$\cD_{\fd}A$ be the subcategory of~$\cD A$ of all objects whose cohomology has finite total dimension (a good reference for derived categories of dg algebras is \cite{K94}).

By~\cite[Proposition 2.4]{AP17}, the canonical inclusion of~$\Gamma_{Q,W}$ in~$\Gamma_{Q,W}G$ induces a functor

 \[
  ?\otimes_{\Gamma_{Q,W}}^L \Gamma_{Q,W} G : \cD \Gamma_{Q,W} \to \cD \Gamma_{Q,W}G
 \]
which restricts to functors
 \[
  ?\otimes_{\Gamma_{Q,W}}^L \Gamma_{Q,W} G : \perf \Gamma_{Q,W} \to \perf \Gamma_{Q,W}G
 \]
 and
  \[
  ?\otimes_{\Gamma_{Q,W}}^L \Gamma_{Q,W} G : \cD_{\fd} \Gamma_{Q,W} \to \cD_{\fd} \Gamma_{Q,W}G.
 \] 
 Similar statements (with arrows reversed) are true for the adjoint functor 
 \[
  \operatorname{RHom}_{\Gamma_{Q,W}G}(\Gamma_{Q,W} G, ?) : \cD \Gamma_{Q,W}G \to \cD \Gamma_{Q,W}.
 \]
Since the generalized cluster category of~$(Q,W)$ is defined to be the Verdier quotient~\[\cC_{Q,W} = \perf \Gamma_{Q,W} / \cD_{\fd} \Gamma_{Q,W},\] we obtain the following generalization of~\cite[Corollary 2.8]{AP17}.

\begin{cor}
 Keep the hypotheses of Theorem~\ref{thm:main}.  Then the pair of adjoint functors between~$\cD \Gamma_{Q,W}$ and~$\cD \Gamma_{Q,W}G$ described above induce functors 
 
 \[
  F : \cC_{Q,W} \to \cC_{Q_G, W_G} \quad \textrm{and} \quad F': \cC_{Q_G, W_G} \to \cC_{Q,W}.
 \] 
\end{cor}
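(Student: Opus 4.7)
The plan is to assemble three ingredients that are already at hand and then invoke the universal property of Verdier quotients. The three ingredients are: (i) the isomorphism $\Gamma_{Q_G,W_G} \cong \bar e \Gamma_{Q,W}G \bar e$ provided by Theorem~\ref{thm:main}; (ii) the Morita equivalence between $\bar e \Gamma_{Q,W}G \bar e$ and $\Gamma_{Q,W}G$ given in Proposition~\ref{prop:eGammae}; and (iii) the fact, recalled in the paragraph just before the corollary, that the adjoint pair $(?\otimes^L_{\Gamma_{Q,W}} \Gamma_{Q,W}G,\ \on{RHom}_{\Gamma_{Q,W}G}(\Gamma_{Q,W}G, ?))$ restricts to functors between $\perf \Gamma_{Q,W}$ and $\perf \Gamma_{Q,W}G$, and between $\cD_{\fd} \Gamma_{Q,W}$ and $\cD_{\fd} \Gamma_{Q,W}G$.

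Combining (i) and (ii) yields a triangle equivalence $\cD \Gamma_{Q_G, W_G} \simeq \cD \Gamma_{Q,W}G$ that restricts to equivalences on the perfect subcategories and on the finite-dimensional subcategories, since compactness and having cohomology of finite total dimension are intrinsic properties preserved by any equivalence coming from a dg Morita equivalence. Transporting the pair from (iii) along this equivalence produces an adjoint pair between $\cD \Gamma_{Q,W}$ and $\cD \Gamma_{Q_G, W_G}$ whose two constituent functors again restrict, in opposite directions, to $\perf$ and to $\cD_{\fd}$.

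It then remains only to descend to the cluster categories. Since $\cC_{Q,W} = \perf \Gamma_{Q,W} / \cD_{\fd} \Gamma_{Q,W}$ and $\cC_{Q_G, W_G} = \perf \Gamma_{Q_G, W_G} / \cD_{\fd} \Gamma_{Q_G, W_G}$, and since each functor of the adjoint pair sends $\cD_{\fd}$ into $\cD_{\fd}$, the universal property of the Verdier quotient produces unique triangle functors $F$ and $F'$ between the two cluster categories, in the two directions. Essentially the whole argument is a formal consequence of Theorem~\ref{thm:main} together with~\cite[Proposition 2.4]{AP17}, so I do not anticipate a significant obstacle; the one mildly technical point is the preservation of $\perf$ and of $\cD_{\fd}$ under the dg Morita equivalence $\bar e \Gamma_{Q,W}G \bar e \sim \Gamma_{Q,W}G$, which follows from standard properties of compact generators and bounded-below finite cohomology.
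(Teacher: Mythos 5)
Your proposal is correct and follows exactly the route the paper takes: the paper's proof is a one-line appeal to the preceding discussion (the adjoint pair restricting to $\perf$ and $\cD_{\fd}$), Theorem~\ref{thm:main}, and the Morita equivalence of Proposition~\ref{prop:eGammae}, which is precisely what you assemble before descending to the Verdier quotients. Your write-up just makes the implicit steps (transport along the equivalence, universal property of the quotient) explicit.
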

\begin{proof}
 This follows from the above discussion and from Theorem~\ref{thm:main}, using the fact that~$\overline{e} \Gamma_{Q,W} G \overline{e}$ and~$\Gamma_{Q,W}$ are Morita equivalent (by Proposition~\ref{prop:eGammae}).
\end{proof}

\section{Example}\label{sec:example}
In this section we compute a detailed example to showcase our construction. For this example we have to assume that $3\neq 0$ in $k$.
Let $G= \langle g, h \,|\, gh = hg, \, g^3= h^3 =1\rangle \cong \Z/3\Z \times \Z/3\Z$. 
Let $Q$ be the quiver of Figure~\ref{fig:ex_quiver}. 
\begin{figure}[h!]
	\[
	\begin{tikzcd}[arrow style=tikz,>=stealth,row sep=5em]
	& & i_1  \arrow[out=60,in=120,loop,swap,"x_1"]
	\arrow[dl]\arrow[dr]\arrow[dd, bend left = 20pt]
	& & \\
	& j_1 \arrow[dr, swap, "y_1"]
	& & j_3 \arrow[ll, swap, "y_3"]
	& & \\
	i_2 \arrow[out=180,in=240,loop,swap,"x_2"]
	\arrow[ur]\arrow[urrr, bend left = 20pt]\arrow[rr]
	& & j_2\arrow[ur,swap, "y_2"]
	& & i_3\arrow[out=300,in=0,loop,swap,"x_3"]
	\arrow[ul]\arrow[ll]\arrow[ulll, bend left = 20pt]
	\end{tikzcd}
	\]
	\caption{The quiver $Q$.}
		\label{fig:ex_quiver}
\end{figure}
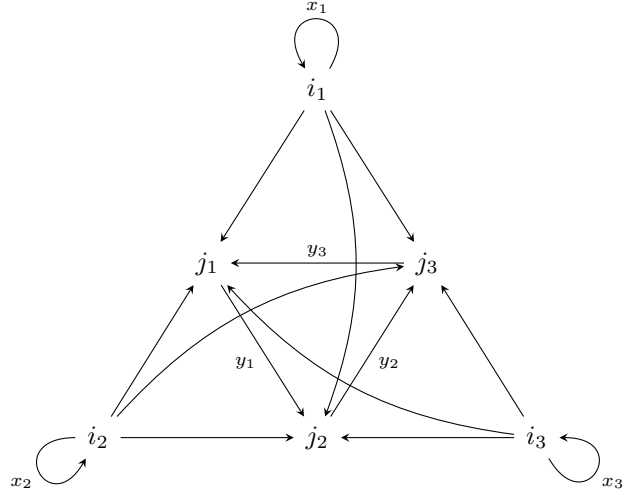
For clarity of notation, we have not given a name to all the arrows.

We will now define an action of $G$ on $Q$. Let $\zeta$ be a primitive third root of unity. 
We let $G$ act on $Q_0$ by $g(j_l) = j_l, g(i_l ) = i_{l+1}, h(i_l) = i_l$, and $h(j_l) = j_{l+1}$. We set $g(x_l) = x_{l+1}$ and $h(x_l) = \zeta x_l$. We let $G$ map the other arrows to arrows in the obvious way. 
If we set $W= y_3y_2y_1 + x_1^3 + x_2^3 + x_3^3$, then $W$ is a potential on $Q$, and we get that $g(W) = h(W) = W$ in $P_Q$. Thus the action of $G$ on $(Q, W)$ is within our setup, and we can apply our construction. 

We need to make some choices.
 Let $\tilde{I} = \{i_1, j_1\}$, and let us write $i$ and $j$ for $i_1, j_1$ for simplicity. We have $G_i = \langle h\rangle$ and $G_j = \langle g\rangle$. Let us denote by $\omega$ (respectively $\tau$) the representations of $G_i$ (respectively~$G_j$) sending $h$ (respectively~$g$) to $\zeta$. We write $\on{tr}$ for the trivial representation of both $G_i$ and $G_j$. By Notation~\ref{not:q0}, the quiver $Q_G$ has vertices as in Figure~\ref{fig:vertices}.
 
 \begin{figure}[h!]
\[
\begin{tikzcd}[arrow style=tikz,>=stealth,row sep=6em]
(i, \on{tr}) & & (j, \on{tr}) && (i, \omega) \\
& (j, \tau) & & (j, \tau^2) \\
 &  & (i, \omega^2) & & 
\end{tikzcd}
\]
\caption{The vertices of $Q_G$.}
\label{fig:vertices}
\end{figure}
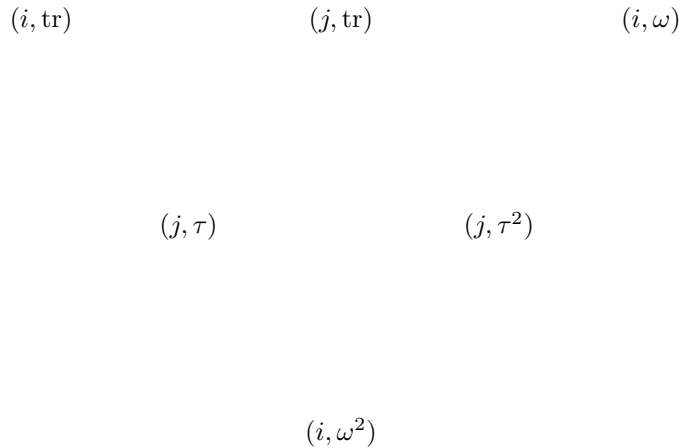

Let us now determine the arrows of $Q_G$, as in Lemma~\ref{lem:q1}.
Let us look at arrows of the form $i_l\to j_m$. Since $G_i\cap G_j = \{1\}$, we get that there are arrows in $Q_G$ from $(i, \rho)$ to $(j, \sigma)$ for all $\rho, \sigma$. 

Let us look at the arrows $x_l$. Since $\chi_{x_l} = \omega$, there is an arrow in $Q_G$ from $(i, \rho)$ to $(i, \sigma)$ whenever $\rho = \sigma \omega$. 

Finally, let us consider the arrows $y_l$. We have $\chi_{y_l} = \on{tr}$, so we get a loop $(j, \sigma)\to (j, \sigma)$ for all $\sigma$. The quiver $Q_G$ is thus as in Figure~\ref{fig:QG}.

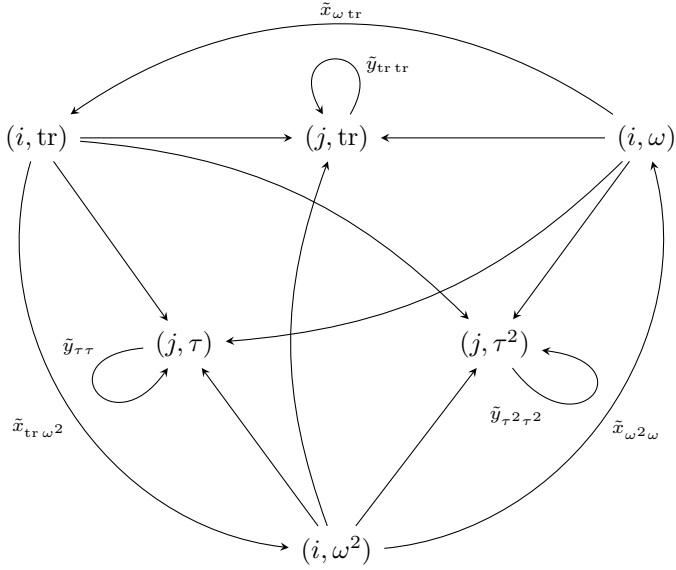
\begin{figure}[h!]
\[
\begin{tikzcd}[arrow style=tikz,>=stealth,row sep=6em]
(i, \on{tr}) \arrow[rr]\arrow[rrrd, bend left=20pt]\arrow[rd]
\arrow[rrdd, bend right = 50pt, swap, "\tilde x_{\on{tr}\omega^2}"]
& & (j, \on{tr}) \arrow[out= 60, in = 120, loop, "\tilde y_{\on{tr}\on{tr}}", pos = .2, swap]
&& (i, \omega)\arrow[ll]\arrow[ld]\arrow[llld, bend left = 20pt]
\arrow[llll, bend right = 35pt, swap, "\tilde x_{\omega\on{tr}}"]
 \\
& (j, \tau)\arrow[out = 185, in = 235, loop,"\tilde y_{\tau\tau}", pos = .2, swap]
 & & (j, \tau^2) \arrow[out = 307, in = 353, loop,"\tilde y_{\tau^2\tau^2}", pos = .2, swap]
 \\
&  & (i, \omega^2) \arrow[ul]\arrow[ur]\arrow[uu, bend left = 20pt]
\arrow[uurr, bend right = 50 pt, swap, "\tilde x_{\omega^2\omega}"]
& & 
\end{tikzcd}
\]
\caption{The quiver $Q_G$.}
\label{fig:QG}
\end{figure}

Let us now compute the potential $W_G$, as in Section~\ref{sec:mainresult}. We recall that, by Remark~\ref{rem:lift}, it is enough to compute $\cyc\phi^{-1}\iota (y_3y_2y_1 + x_1^3 + x_2^3 + x_3^3)$.
The term $\cyc\phi^{-1}\iota (y_3 y_2 y_1)$ will consist of a linear combination of the three cycles $\tilde{y}_{\sigma \sigma}^3$, $\sigma = \on{tr}, \tau, \tau^2$, with some coefficients. We can determine these coefficients with the help of the second formula of Proposition~\ref{prop:iota(path)}. 

First, observe that $y_3$ is a distinguished arrow, and that $h(y_2)= y_3$ and $h^{-1}(y_1) = y_3$. Moreover, $\kappa_{j_3} = h$. By the formula, the coefficient of $\tilde{y}_{\sigma \sigma}^3$ is $$\sigma(1\cdot h\cdot h^{-1})\sigma(h^{-1}\cdot h^{-1}\cdot h^{-1})\sigma(h\cdot 1\cdot h^{-1}) = \sigma(h^{-3}) = 1$$
for all $\sigma \in \{\on{tr}, \tau, \tau^2\}$.  

Let us look at the term $\cyc\phi^{-1}\iota(x_1^3)$. This will be equal to the cycle $\tilde{x}_{\omega \on{tr}}\tilde x_{\omega^2 \omega} \tilde x_{\on{tr}\omega^2}$ times a coefficient. The arrow $x_1$ is distinguished and $\kappa_{i_1} = 1$, therefore the coefficient is simply
$$
\on{tr}(1\cdot 1\cdot 1)\omega(1\cdot 1\cdot 1)\omega^2(1\cdot 1\cdot 1) = 1.$$
Ultimately, we have determined that 
$$W_G = \tilde{x}_{\omega \on{tr}}\tilde x_{\omega^2 \omega} \tilde x_{\on{tr}\omega^2} + \tilde{y}_{\on{tr}, \on{tr}}^3 + \tilde{y}_{\tau\tau}^3 + \tilde{y}_{\tau^2\tau^2}^3.$$
We remark that in this example the quiver with potential $(Q_G, W_G)$ is isomorphic to the opposite of $(Q, W)$. 
By Theorem~\ref{thm:main}, the Ginzburg dg algebra $\Gamma_{Q_G, W_G}$ is Morita equivalent to the skew group dg algebra $\Gamma_{Q, W}G$.

\section{Index of notations} \label{sec:index_nota}

Since this article is quite technical and required us to set up a great deal of notation, we include here a table of symbols. For each symbol we write a short and imprecise description, as well as redirect the reader to where in the text the precise definition can be found.\\

\begin{tabular}{|c|p{10cm}|p{2.1cm}|}
	\hline
		&&\\[-1em]
	Symbol & Description & Reference\\ \hline\hline
	&&\\[-1em]
	$\widehat{k Q}_{cyc}$ & The complete algebra of cyclic paths of $Q$. & 
	Section~\ref{subsec:qp}
	\\ \hline
	\\[-1em]
	$\sh$ & The shuffle function for cyclic paths. &
	Section~\ref{subsec:qp}
	\\ \hline
&&\\[-1em]
	$e_\rho, e_{\rho|_H}$ & Idempotents of a group algebra corresponding to the 
	character $\rho$. &
	Section~\ref{subsec:sga}
	\\ \hline
	&&\\[-1em]
	$G(i), G_i$ & The orbit and stabilizer of $i$ under the action of $G$. &
	Section~\ref{subsec:sga}
	\\ \hline
	&&\\[-1em]
	$G_{ij}$ & The common stabilizer of $i$ and $j$. &
	Section~\ref{subsec:sga}
	\\ \hline
	&&\\[-1em]
	$\tilde I$ & A chosen set of representatives of $Q_0$ under the action of $G$. &
	Notation~\ref{not:itilde}
	\\ \hline
	&&\\[-1em]
	$i_\circ, j_\circ$ & Vertices in $\tilde{I}$. &
	Notation~\ref{not:itilde}
	\\ \hline
	&&\\[-1em]
	$M, MG, M_{ij}$ & Vector spaces generated by arrows. &
	Notation~\ref{not:M}
	\\ \hline
	&&\\[-1em]
	$\chi_a$ & The character of $G_{\source(a)\target(a)}$ such that $g(a)= \chi_a(g)a$ for $g \in G_{\source(a)\target(a)}$. &
	Notation~\ref{not:chi}
	\\ \hline
	&&\\
	$Q_G$ & The quiver of the skew group algebra $(kQ)G$. &
	Notation~\ref{not:q0}, Lemma~\ref{lem:q1}
	\\ \hline
	&&\\[-1em]
	$\bar e$ & A chosen idempotent of $(kQ)G$ such that $kQ_G\cong \bar e(kQ)G\bar e$. &
	Notation~\ref{not:ebar}
	\\ \hline
	&&\\[-1em]
	
	$\kappa_i$ & A chosen element of $G$ such that $\kappa_i(i)\in \tilde I$.&
	Notation~\ref{not:kappa}
	\\ \hline
	&&\\[-1em]
	$R_{i_\circ j_\circ}$ & A chosen set of representatives of $G(i_\circ)$ under the action of $G_{j_\circ}$. &
	Notation~\ref{not:distinguished}
	\\ \hline

	&&\\[-1em]
	$D(i_\circ, j_\circ), D$ & Chosen sets of distinguished arrows, used to describe $(Q_G)_1$. &
	Notation~\ref{not:distinguished}
	\\ \hline
	&&\\[-1em]
	$\tilde a_{\rho \sigma}$ & The arrow (if it exists) in $Q_G$ corresponding to $a\in Q$.&
	Notation~\ref{not:atilde}
	\\ \hline
	&&\\
	$\phi$ & The isomorphism $kQ_G\to \bar e(kQ)G\bar e$ which we define.  &
	Definition~\ref{def:phivertices}, Definition~\ref{def:phiarrows}
	\\ \hline
	&&\\[-1em]
	$\iota$ & An almost-multiplicative function $kQ\to (kQ)G$ we define. &
	Notation~\ref{def:iota}
	\\ \hline
	&&\\[-1em]
	$\on{cyc}$ & A function that is zero on non-cycles and the identity on cycles.&
	Notation~\ref{not:cyc}
	\\ \hline
	&&\\[-1em]
	$W_G$ & The potential we define on $Q_G$. &
	Definition~\ref{def:wg}
	\\ \hline
	
\end{tabular}

\bibliographystyle{alpha}
\bibliography{Bibliography}

\end{document}